%% file: 0-main.tex
\documentclass[twocolumn]{autart}

\usepackage{url}
\usepackage{graphicx}
\usepackage{mathtools}
\usepackage{amsmath, stmaryrd}
\usepackage{amssymb}
\usepackage{enumitem}
\usepackage{tabularray}
\setitemize{nolistsep}
\usepackage{caption}
\usepackage{subcaption}
\usepackage{pifont}
\usepackage{algorithm} 
\usepackage{algorithmic}  
\usepackage[ruled,vlined,algo2e,norelsize]{algorithm2e} 
\setenumerate{nolistsep}
\usepackage{todonotes}
\usepackage{utfsym}
\usepackage{booktabs}

\usepackage[framemethod=tikz,xcolor=true]{mdframed}
\usepackage{accents}

\DeclareMathOperator{\intr}{int}

\DeclareMathOperator{\linspan}{span}

\DeclareMathOperator{\PCont}{PC}
\DeclareMathOperator{\Cont}{C}

\DeclareMathOperator{\sbjto}{s. t.}
\DeclareMathOperator*{\argmin}{arg\,inf}

\DeclareMathOperator{\Ortho}{O}

\newcommand{\ra}{\ensuremath{\rightarrow}}

\newcommand{\lra}{\ensuremath{\longrightarrow}}
\newcommand{\eps}{\ensuremath{\varepsilon}}
\renewcommand{\le}{\ensuremath{\leqslant}}
\renewcommand{\ge}{\ensuremath{\geqslant}}
\renewcommand{\leq}{\le}

\newcommand{\cmark}{\ding{51}}  
\newcommand{\xmark}{\ding{55}}  

\newcommand{\dual}{^\star}

\let\emptyset\varnothing

\newcommand{\qedwhite}{\hfill \ensuremath{\Box}}

\newcommand{\abs}[1]{\ensuremath{\left\lvert{#1}\right\rvert}}

\newcommand{\inprod}[2]{\ensuremath{\left\langle{#1},\,{#2}\right\rangle}}

\newcommand{\secref}[1]{\S\ref{#1}}

\newcommand{\JOrtho}[1]{\Ortho_J}

\newcommand{\tangent}{T}
\newcommand{\dtangent}{\tangent\tangent}

\newcommand{\gradol}{\textsf{gradOL}}
\newcommand{\dlcvx}{\textsf{DLCvx}}

\newcommand{\odif}[1]{\mathrm{d} #1}

\newcommand{\xz}{\ensuremath{\overline{x}}}

\newcommand{\param}{\overline{x}}

\newcommand{\ol}{\overline}

\newcommand{\wt}{\widetilde}

\newcommand{\Let}{\coloneqq}
\newcommand{\teL}{\eqqcolon}
\newcommand{\nn}{\nonumber}

\newcommand{\Nz}{\ensuremath{\mathbb{N}}}
\newcommand{\N}{\ensuremath{\Nz^*}}

\newcommand{\norm}[1]{\ensuremath{\left\lVert #1 \right\rVert}}

\newcommand{\epower}[1]{\ensuremath{\mathrm{e}^{#1}}}

\newcommand{\manifold}{\mathsf{M}}

\newcommand{\pwcfunc}{\ensuremath{\mathsf{PWC}}}



\makeatletter
\renewcommand*\env@matrix[1][*\c@MaxMatrixCols c]{%
  \hskip -\arraycolsep
  \let\@ifnextchar\new@ifnextchar
  \array{#1}}
\makeatother

\DeclarePairedDelimiterX\lcrc[2]\lbrack\rbrack{#1,#2}
\DeclarePairedDelimiterX\lcro[2]\lbrack\lbrack{#1,#2}
\DeclarePairedDelimiterX\lorc[2]\rbrack\rbrack{#1,#2}
\DeclarePairedDelimiterX\loro[2]\rbrack\lbrack{#1,#2}

\renewcommand{\theequation}{(\arabic{equation})}
\makeatletter
\def\tagform@#1{\maketag@@@{\ignorespaces#1\unskip\@@italiccorr}}
\makeatother

\let\oldsqrt\sqrt
\def\sqrt{\mathpalette\DHLhksqrt}
\def\DHLhksqrt#1#2{%
\setbox0=\hbox{$#1\oldsqrt{#2\,}$}\dimen0=\ht0
\advance\dimen0-0.2\ht0
\setbox2=\hbox{\vrule height\ht0 depth -\dimen0}%
{\box0\lower0.4pt\box2}}

\newcommand{\Rbb}{\ensuremath{\mathbb{R}}}

\newcommand{\as}{\ensuremath{^\ast}}

\newcommand{\Dd}{\ensuremath{\mathcal{D}}}

\newcommand{\lpL}[1][]{\ensuremath{{L}^{#1}}}

\newcommand{\stpde}{\ensuremath{y}}

\newcommand{\spacedomainpde}{\ensuremath{\Omega}}

\newcommand{\uncertsetpde}{\ensuremath{\ol{\mathbb{W}}}}

\newcommand{\uncertpde}{\ensuremath{w}}

\definecolor{pinegreen}{rgb}{0.0, 0.545, 0.447} 

\newcommand{\distfield}{\ensuremath{v}}

\newcommand{\st}{\ensuremath{x}}
\newcommand{\cont}{\ensuremath{u}}

\newcommand{\tinit}{0}
\newcommand{\tfin}{T}

\newcommand{\rcost}{\ensuremath{c}}
\newcommand{\fcost}{\ensuremath{c_F}}

\newcommand{\horizon}{\ensuremath{T}}

\newcommand{\admcont}{\ensuremath{\mathbb{U}}}
\newcommand{\J}{\ensuremath{\mathbb{J}}}

\newcommand{\dimst}{\ensuremath{d_x}}
\newcommand{\dimcon}{\ensuremath{d_u}}

\DeclarePairedDelimiterX\aset[1]\lbrace\rbrace{\def\suchthat{\; \delimsize\vert\;}#1}

\newcommand{\admst}{\mathbb{X}}
\newcommand{\admcon}{\mathcal{U}}
\newcommand{\valf}{\ensuremath{F_{\circ}^{*}}}
\newcommand{\fsblset}{\ensuremath{\mathbb{X}_T}}
\newcommand{\quito}{\textsf{QuITO}}

\newcommand{\rzero}{\ensuremath{\rho}}

\DeclareFontFamily{U}{mathx}{}
\DeclareFontShape{U}{mathx}{m}{n}{<-> mathx10}{}
\DeclareSymbolFont{mathx}{U}{mathx}{m}{n}
\DeclareMathAccent{\widecheck}{0}{mathx}{"71}

\newcommand{\slack}{\ensuremath{s}}

\newcommand{\tseq}{\mathcal{T}}

\newcommand{\dvar}{\ensuremath{\widetilde{N}}}
\newcommand{\feasSip}{\mathcal{S}}
\newcommand{\dict}{\ensuremath{\mathcal{D}}}

\newcommand{\Param}{\ensuremath{\alpha}}
\newcommand{\Paramset}{\ensuremath{\mathcal{S}_{u,s}}}
\newcommand{\Paramw}{\ensuremath{\beta}}
\newcommand{\Paramv}{\ensuremath{\gamma}}
\newcommand{\Reg}{\ensuremath{\Psi}}
\newcommand{\reg}{\ensuremath{\psi}}

\newcommand{\gfunc}{\ensuremath{\mathcal{G}}}

\newcommand{\valuefunc}{\ensuremath{F_{\circ}^{*}}}

\newcommand{\glbopt}{\textsf{GlbOpt}}

\newcommand{\regul}{\textsf{Regu}}

\newcommand{\objective}{\ensuremath{F_{\circ}}}
\newcommand{\objectivedisc}{\ensuremath{\widehat{F}_{\circ}}}

\begin{document}

\begin{frontmatter}


\title{Exact solutions to a class of constrained optimal control problems via lossless convexification for digital control \thanksref{footnoteinfo}} 

\thanks[footnoteinfo]{The first two authors contributed equally. This paper was not presented at any IFAC 
meeting.}

\author[1]{Vaibhav Upadhyay}\ead{vaibhav.u@iitb.ac.in},    
\author[2]{Siddhartha Ganguly}\ead{sganguly41@gatech.edu},               
\author[1]{Debasish Chatterjee}\ead{dchatter@iitb.ac.in}  

\address[1]{Centre for Systems and Control, Indian Institute of Technology Bombay, India}  
\address[2]{Daniel Guggenheim School of Aerospace Engineering, Georgia Institute of Technology, Atlanta, USA}             
          
\begin{keyword}                           
Optimization-based controller synthesis, optimal control, numerical methods.               
\end{keyword}                             

\begin{abstract}                          
This article establishes a new numerically viable technique for solving a class of constrained, nonconvex, continuous-time optimal control problems (OCPs) for linear systems that commonly arise in aerial and aerospace applications. The lossless convexification technique is employed to translate the original nonconvex OCP with annular control magnitude constraints into a convex problem, and then by finitely parametrizing the control space with piecewise constant functions, an efficient numerical approach is established that guarantees \emph{exact} solutions while ensuring the satisfaction of an uncountable family of constraints over a compact time interval. The effectiveness of the approach is demonstrated on a spacecraft landing problem involving three degrees of freedom (DoF), underscoring its potential for real-world aerospace guidance and control tasks.
\end{abstract}

\end{frontmatter}

\input{Introduction}
\input{Problem_Setup}
\input{main_results}
\input{Numerics}


\bibliographystyle{plain}       
\bibliography{refs}           

\end{document}

%% file: Introduction.tex
\section{Introduction}
\label{s:intro}
This article addresses a challenging issue in finite-horizon optimal control problems (OCPs) for linear systems under a class of annular (nonconvex) control constraints. The particular type of constraints addressed in this work arises in a wide range of applications, including energy- and fuel-optimal trajectory planning of aerial and aerospace vehicles \cite{ref:ZSGZHHCHYWFYW-23}, automotive systems \cite{ref:WasKolSteDel-14}, and the optimal control of multiplexed and overactuated systems \cite{ref:MWH-21}. Key challenges that arise in the context of the numerical viability of OCPs with upper and lower bounds on the control magnitude are primarily twofold: (a) due to the ensuing nonconvexity, standard direct numerical methods that discretize the original infinite-dimensional OCP into a finite-dimensional nonlinear programming (NLP) problem retains its nonconvexity and frequently encounter feasibility and convergence issues \cite{ref:AWLTB-06,ref:Yakub-97,ref:SedOdenDir-23}, and (b) accounting for a compact but uncountable family of inequality constraints in a computationally viable fashion.

The so-called \emph{lossless convexification approach} offers an alternative viewpoint: It identifies a convex problem (typically a relaxation) whose optimal solution is identical to the optimal solution to the original problem. A growing body of work is resorting to this approach. For instance, nonconvex annular control constraints were studied in \cite{ref:MWHBA-14b}; \cite{ref:YRLXLD-24} examined nonconvex equality alongside linear ones; an extreme-point convex relaxation approach for lossless convexification was introduced in \cite{ref:MWH-21}; and for linear systems \cite{ref:YRLX-23} demonstrated that a convex-hull-based relaxation provides an alternative convexification strategy while retaining the original solution. Related approaches based on successive convexification for nonconvex optimal control, as well as discrete-time formulations of lossless convexification, were considered in \cite{ref:SuccConv-25,ref:Dlcvx-26}. These prior contributions conform to the category of simplifying the nonconvex problem to an equivalent convex one and then solving the latter computationally. The issue (b), pointed out in the preceding paragraph, has remained unaddressed yet.


\subsection*{Contributions}
Against the preceding backdrop, our efforts are directed at developing a computationally viable flowchart for \emph{exact solutions} to OCPs for linear systems with nonconvex annular constraints and their digital implementations. This class of OCPs is of special importance in aerospace applications. More precisely:
\begin{enumerate}[label=\textup{(C-\alph*)}, leftmargin=*, widest=b, align=left]
    \item \label{contrib:1} We employ the method of \emph{lossless convexification} \cite{ref:MWHBA-14b} to translate the given nonconvex OCP into an equivalent convex OCP. The resulting convexified optimal control problem becomes an infinite-dimensional constrained convex program, generally intractable by numerically viable means, with exact solutions being well beyond the reach of computational methods. Towards this end, for computational viability we employ a piecewise constant parametrization of the admissible controls fine-tuned to handshake with digital control implementations. The (uniform) time-step of the discretization is the user's choice and may be as small as permissible in alignment with the hardware resources at hand.

	\item \label{contrib:2}  Although the ensuing optimal control problem is finitely parametrized, it still stipulates the satisfaction of an uncountable family of constraints. To address this, we establish a novel approach that employs numerical solutions to convex semi-infinite programs (CSIPs) to solve such problems in an \emph{exact fashion}, guaranteeing constraint satisfaction over the entire uncountable family. Our approach strategically selects a finite set of time instants and enforces the constraints at these points, but this selective enforcement is shown to be sufficient to guarantee satisfaction of the \emph{entire uncountable set} of constraints across the complete time horizon. Moreover, we establish that the optimal value and associated optimizers of the CSIP can be recovered exactly by solving a finite and relaxed convex optimization problem.
\end{enumerate}

\subsection*{Perspective and questions}
Central to our contribution is the following question: \emph{Are existing numerical methods sufficient for this class of problems?} Not always. Indeed, standard numerical techniques for optimal control --- such as direct multiple shooting and direct collocation \cite{ref:betts-book,ref:SGNRDCRB-22,ref:QuITOv2} --- adhere to the classical discretize-then-optimize paradigm. These techniques enforce the satisfaction of the constraints at finitely many time-points over the horizon (typically the points of discretization) that \emph{may not} imply constraint satisfaction for \emph{all} time instants; this can turn out to be detrimental in safety-critical applications. Indeed, in \secref{subsec:infeas_fuel_opt} we demonstrate the infeasibility of a particular problem that remained undetected via techniques reported in the literature, whereas our technique reports its infeasibility quite readily. By enabling the enforcement of a compact uncountable family of convex constraints, our framework provides guarantees of exact solutions, offering, to our knowledge, the first such result in numerical optimal control, and demonstrates a level of numerical fidelity that is generally not achieved by conventional direct methods. 

\emph{What is special about the piecewise constant parametrization?}
The adoption of a piecewise constant parametrization for control trajectories serves, first, to transform an inherently infinite-dimensional optimization problem into a finite-dimensional mathematical program; this is inevitable, especially when addressing general constrained problems \cite[Chapter 16, Section 1]{ref:AliTsa-21}. Second, this particular parametrization is especially amenable to digital implementation, since its inherently discrete and quantized structure maps naturally to fixed-precision hardware, facilitating efficient, reliable, and faithful execution.

\textbf{Purpose, limitations, and applicability of our algorithm:} As mentioned in \eqref{contrib:1}--\eqref{contrib:2}, the primary contribution and overarching goal of our architecture is to develop a computationally viable, robust-optimization–driven pipeline for a class of continuous-time optimal control problems, providing guarantees of (i) exact solutions and (ii) continuous-time satisfaction (certification) of safety constraints.

That said, these guarantees come at the cost of computational time. In particular, the architecture includes a global-optimization module, which is the dominant source of complexity; see \S\ref{subsec:algo} and Footnote 7 as well. Consequently, from an applications standpoint, the current framework is \emph{not} well-suited for online implementations such as model predictive control (MPC). Rather, it is primarily intended for \textbf{offline trajectory generation}, where \textbf{solution accuracy, feasibility, and continuous-time constraint certification} are critical, and substantial computational resources can be allocated. Representative settings include powered-landing and re-entry guidance design, and safe trajectory synthesis for quadrotor landing, where brief inter-sample violations of safety or actuator constraints are unacceptable, and offline computation is routinely used to obtain certified feasible trajectories.

\subsection*{Notation}
For us \(\N \Let \aset{1,2,\ldots}\) is the set of positive integers and \(\Nz\Let \N \cup \aset[]{0}\). If \(m_1, m_2 \in \N\) and \(m_1<m_2\), then \([m_1;m_2]\Let \aset[]{m_1,m_1+1,\ldots,m_2}\). The vector space \(\Rbb^{d}\) is equipped with standard inner product, \(\inprod{x}{y}\Let \sum_{j=1}^d x_j y_j\) for every \(x,y \in \Rbb^{d}\). For a fixed \(\horizon > 0\) and \(n \in \N\), the space of \(\Rbb\)-valued piecewise constant functions with \(n\) uniform segments over \(\lcrc{0}{\horizon}\) is defined as  
\( \pwcfunc_n(\lcrc{0}{\horizon};\Rbb) = \{ u: \lcrc{0}{\horizon} \to \Rbb \mid u(t) = u_k, \text{ for } t \in \lcrc{t_k}{t_{k+1}}, \, k = 0, \dots, n-1 \} \),  
with uniform grid points \(t_k \Let \frac{k\horizon}{n}\), and \(k = 0, \ldots, n \).  

%% file: Problem_Setup.tex
\section{Problem formulation}\label{sec::prob_form}
Let \(\dimst,\dimcon, d_r, m \in \N\) and fix a time horizon \(\tfin>0\). Let us consider a linear time-invariant controlled dynamical system
\begin{equation}
	\label{eq:sys}
	\dot \st(t) = A\st(t) + B\cont(t) + \distfield\,\,\text{for all\ } t \in \lcrc{0}{T}, 
\end{equation}
where \(\lcrc{0}{\horizon} \ni t \mapsto \st(t) \in \Rbb^{\dimst}\) is the absolutely continuous state trajectory and \(\lcrc{0}{\horizon} \ni t \mapsto \cont(t) \in \Rbb^{\dimcon}\) is the locally integrable control trajectory \cite[p. 6]{ref:Zab-20}; \(\distfield \in \Rbb^{\dimst}\) represents the external constant vector field.\footnote{The notion of absolute continuity is standard in analysis; see \cite[Chapter 3, \S 3.5, p. 105]{ref:FolReal-99} and \cite{ref:Zab-20} for a control theoretic context. Adapted to our setting: For every \(\eps'>0\) there exists \(\delta>0\) such that for any finite collection of pairwise disjoint open intervals
\(\aset[\big]{\loro{a_k}{b_k}}_{k=1}^{{L}} \subset \lcrc{0}{\horizon}\),
\begin{align}
\sum_{k=1}^{L} (b_k-a_k) < \delta \;\Longrightarrow\;
\sum_{k=1}^{L} \norm{\st(b_k) - \st(a_k)} < \eps',  \nn 
\end{align}
where \(\norm{\cdot}\) denotes the Euclidean norm on \(\Rbb^{\dimst}\).} The matrices \(A \in \Rbb^{\dimst \times \dimst}\) and \(B \in \Rbb^{\dimst \times \dimcon}\) represent the state and actuation matrices, respectively. The boundary and path constraints associated with \eqref{eq:sys} are
\begin{align}
    \st(\tinit) & \Let \param \in \Rbb^{\dimst},\,\, r_{F}(\st(\horizon)) = 0, \label{eq:state_constraint_a} \\
    \st(t) &\in \admst \,\, \text{ for all } t \in \lcrc{0}{\horizon}, \label{eq:state_constraint_b}
\end{align}
and the corresponding OCP is
\begin{equation}
	\label{eq:OCP}
\begin{aligned}
& \inf_{\cont(\cdot) \in \,\admcon }	&&  \objective(\param,\cont(\cdot))\Let \fcost\bigl(\st(\tfin)\bigr) + \int_{\tinit}^{\horizon} \rcost\bigl(\kappa(\cont(t))\bigr) \odif{t} . \\
&  \sbjto		&&  \hspace{-2mm}\begin{cases}
\text{dynamics }\eqref{eq:sys}, \text{ constraints \eqref{eq:state_constraint_a} and \eqref{eq:state_constraint_b}},\\
\text{and } u(t) \in \admcont\text{ for all } t \in \lcrc{0}{\horizon},\\
\end{cases}
\end{aligned}
\end{equation}
with the following data:
\begin{enumerate}[label=\textup{(4-\alph*)}, leftmargin=*, widest=b, align=left]

\item \label{OCPdata1} The map \(\kappa: \Rbb^{\dimcon} \lra \lcro{0}{+\infty}\) is near-monotone, Lipschitz continuous, convex, and strictly positive except possibly at \(0\).\footnote{A function \(f:\Rbb^{\nu}\lra \Rbb\) is near-monotone if \(\lim_{\|\xi\| \to +\infty}f(\xi)  = +\infty\).} The set \(\admcont\) is a nonempty compact subset of \(\Rbb^{\dimcon}\) such that
\(\admcont \Let \{u \in \Rbb^{\dimcon} \mid \rho_1 \leq \kappa(u) \leq \rho_2,\,\, Cu \leq d\}\)
with \(0 < \rho_1 < \rho_2 < +\infty \), where \( C \in \Rbb^{m \times \dimcon} \) and \( d \in \Rbb^m \). The control trajectory takes values in 
\begin{equation}
	\hspace{-2mm}\admcon\Let \aset[\big]{u \in \mathsf{PC}(\lcrc{0}{\horizon};\admcont)\suchthat \cont(t) \in \admcont\,\,\text{for all}\,\,t \in \lcrc{0}{T}}, \nn
\end{equation}
where \(\mathsf{PC}(\lcrc{0}{\horizon};\admcont)\) stands for \(\admcont\)-valued piecewise continuous function is defined on \(\lcrc{0}{\horizon}.\)

\item \label{OCPdata2} The \emph{instantaneous cost} \(\rcost:\lcro{0}{+\infty} \lra\lcro{0}{+\infty} \) is convex and strictly positive except possibly at \(0\). Moreover, \(\rcost(\cdot)\) is locally Lipschitz and the \emph{terminal cost} \(\fcost(\cdot)\) is affine.

\item \label{OCPdata3} The map \(r_F: \Rbb^{\dimst} \lra \Rbb^{d_r}\) is affine and the admissible state set \(\admst  \subset \Rbb^{\dimst}\) is restricted to be a fixed linear subspace of \(\Rbb^{\dimst}\). 
\end{enumerate}
\vspace{0.5mm}
A control trajectory \(\cont(\cdot)\) is \textit{feasible} if it belongs to the class \(\mathcal{U}\), satisfies the control constraint in \eqref{OCPdata1}, and the corresponding solution \(\st(\cdot)\) of \eqref{eq:sys} satisfies \(r_F(\st(\horizon)) = 0\). We assume that the interior of the feasible set associated with the OCP \eqref{eq:OCP} is nonempty, i.e., there exists at least one admissible process \(\bigl(x\as(\cdot),u\as(\cdot)\bigr)\) for which \(\objective(\param,\cont\as(\cdot)) < +\infty\); we call such a \(\cont\as:\lcrc{0}{\horizon} \lra \admcont\) is an \emph{optimal control trajectory}.

\subsection{Convex relaxation and optimal solution}\label{subsec:lossconv}
Observe that the control constraint set \(\admcont\) in \eqref{eq:OCP} is nonconvex; indeed in \(\rho_1 \le\kappa(\cont(t)) \le\rho_2,\) the lower bound \(\kappa(u(t)) \ge \rho_1\) makes the set of admissible control actions annular and hence nonconvex. This subsection introduces our relaxation procedure, starting with a new decision function \(\slack: \lcrc{0}{\horizon} \lra [0, +\infty[ \), which we will refer to as the slack variable; we reformulate the nonconvex control constraint in \eqref{eq:OCP} into
\begin{align}
    \label{eq:conv_constraint}
    \begin{cases}
        \slack(t) \in [\rho_1, \rho_2],\,\, C\cont(t) \leq d, \text{ and} \\ 
        \kappa(u(t)) \leq \slack(t) \text{ for all } t \in \lcrc{0}{\horizon}\\
    \end{cases}
\end{align}
and the reformulated OCP is 
\begin{equation}
	\label{eq:COCP}
\begin{aligned}
&\hspace{-2mm} \inf_{\cont(\cdot), \slack(\cdot)}	&& \hspace{-3mm} \objectivedisc(\param,\cont(\cdot), \slack(\cdot)) \Let \fcost\bigl(\st(\tfin)\bigr) + \int_{\tinit}^{\horizon} \rcost\bigl(\slack(t)\bigr) \odif{t} \\
&  \sbjto		&&  \hspace{-4mm}
\text{dynamics }\eqref{eq:sys}, \text{ constraints \eqref{eq:state_constraint_a}, \eqref{eq:state_constraint_b} and \eqref{eq:conv_constraint}.}\\
\end{aligned}
\end{equation}
The OCP \eqref{eq:COCP} is convex because, after introducing the slack variable \(\slack(\cdot)\), all decision variables enter through convex constraints and a convex objective. In particular, the lifted constraint \(\kappa(\cont(t)) \le \slack(t)\) is convex since \(\kappa(\cdot)\) is convex and this is an epigraph-type inequality; the bounds \(\rho_1 \le \slack(t) \le \rho_2\) and the linear control constraints are affine. The dynamics are linear, so the state depends affinely on the control, and imposing convex state/path constraints pointwise in time preserves convexity. Finally, the objective in \eqref{eq:COCP} is convex in \(\slack(\cdot)\), so the overall optimization problem is convex.

\begin{assum}
    \label{COCP_criteria}
    There exist feedback matrices \(F\) and \(G\) (named \emph{friends} in \cite{ref:HLT-02}) such that the set \(\admst\) in \eqref{OCPdata3} is the strongly controllable subspace for the linear system \((A+BF,BG,CF,CG)\), where \(A\) and \(B\) are the system matrices in \eqref{eq:sys}, and \(C\) is the control constraint matrix defined in \ref{OCPdata1}.
\end{assum}
Assumption \ref{COCP_criteria} provides a sufficient condition \cite{ref:MWHBA-14b} ensuring that solutions to \eqref{eq:COCP} are also solutions to \eqref{eq:OCP}.
Note that the existence of optimal solutions to \eqref{eq:COCP} can be distilled directly from the problem data \eqref{OCPdata1}--\eqref{OCPdata3} by applying \cite[Theorem 23.11]{ref:FC-13}. The next theorem quotes a standard result of \emph{lossless convexification}.
\begin{thm}\cite[Theorem 2]{ref:MWHBA-14b}
If Assumption \ref{COCP_criteria} is satisfied in the context of \eqref{eq:OCP} along with its associated data \eqref{OCPdata1}--\eqref{OCPdata3}, then optimal solutions of the OCP \eqref{eq:COCP} are also the optimal solutions of the original OCP \eqref{eq:OCP}. 
\end{thm}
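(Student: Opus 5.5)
The plan is the standard lossless-convexification argument via the Pontryagin maximum principle applied to the relaxed problem \eqref{eq:COCP}. The target is to show that at any optimal solution \((x\as(\cdot),u\as(\cdot),s\as(\cdot))\) of \eqref{eq:COCP} the lifted constraint is active almost everywhere, i.e.\ \(\kappa(u\as(t)) = s\as(t)\) for a.e.\ \(t\in\lcrc{0}{\horizon}\). Once this holds, \(\rho_1\le\kappa(u\as(t))\le\rho_2\) and \(Cu\as(t)\le d\), so \(u\as(t)\in\admcont\) and \(u\as(\cdot)\) is feasible for \eqref{eq:OCP}. Its cost there equals \(\objectivedisc(\param,u\as,s\as)\). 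Conversely, every feasible \(u\) of \eqref{eq:OCP} gives a feasible pair \((u,\kappa(u))\) of \eqref{eq:COCP} with the same cost, so the optimal value of \eqref{eq:COCP} is at most that of \eqref{eq:OCP}. Hence \(u\as(\cdot)\) is optimal for \eqref{eq:OCP}. Existence of a relaxed optimum is taken from the remark citing \cite[Theorem 23.11]{ref:FC-13}.

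\textbf{Step 1: necessary conditions.} I will write the maximum principle for \eqref{eq:COCP}, with the state constraint \(x(t)\in\admst\) handled as a linear-subspace constraint. This gives an abnormal multiplier \(\lambda_0\in\{0,1\}\) and a costate \(\lambda(\cdot)\) solving \(\dot\lambda=-A^\top\lambda\), up to a component orthogonal to \(\admst\) coming from the state constraint. The transversality condition is determined by the affine maps \(\fcost\) and \(r_F\). The pair \((u\as(t),s\as(t))\) then maximizes the pointwise Hamiltonian
\[
H=-\lambda_0 \rcost(s)+\lambda(t)^\top B u
\]
over the convex set \(\{(u,s): \kappa(u)\le s,\ \rho_1\le s\le\rho_2,\ Cu\le d\}\).

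\textbf{Step 2: activity of the lifted constraint.} I will argue by contradiction. Suppose \(\kappa(u\as)<s\as\) on a set of positive measure. If \(\lambda(t)^\top B\neq 0\) there, the maximizer over \(u\) is pushed to the boundary of the sublevel set \(\{\kappa\le s\}\cap\{Cu\le d\}\). It then remains to rule out a face of \(Cu\le d\), which would let \(\kappa(u)<s\) persist. Near-monotonicity and strict positivity of \(\kappa\) make the sublevel sets bounded with nonempty interior, which controls the first alternative. The remaining degenerate alternatives are \(\lambda^\top B=0\) on an interval, or \(\lambda^\top B\) orthogonal to an active constraint face, i.e.\ lying in the span of the rows of \(C\).

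\textbf{Step 3: the main obstacle.} This step is to exclude those degenerate situations, and it is where Assumption~\ref{COCP_criteria} enters. Since \(\lambda\) is analytic, a degeneracy on a positive-measure set propagates to all of \(\lcrc{0}{\horizon}\). I will then use the friends \(F,G\) to rewrite the degeneracy as an output-zeroing condition for the system \((A+BF,BG,CF,CG)\). Strong controllability of \(\admst\) then forces the costate component relevant on \(\admst\) to vanish, with the state-constraint multiplier absorbing only the orthogonal part. Combined with transversality, this gives \(\lambda_0=0\) and a trivial multiplier vector, contradicting the nontriviality clause of the maximum principle. A separate normal-case check is also needed: with \(\lambda_0=1\) and \(\rcost\) convex and positive, the \(s\)-maximization must not leave \(s\) strictly above \(\kappa(u)\) when \(u\) is interior. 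I will do this by a direct perturbation, lowering \(s\) to \(\max\{\rho_1,\kappa(u)\}\) and checking whether the cost strictly decreases. I expect the bookkeeping linking the state-constraint multiplier to the strong controllable subspace to be the delicate part, and I would follow the structure of \cite{ref:MWHBA-14b} closely there.
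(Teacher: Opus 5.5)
The paper gives no proof of this statement; it quotes \cite[Theorem 2]{ref:MWHBA-14b}, whose argument follows the maximum-principle/strong-controllability route you outline. Your outline has a genuine gap at the point where Step 3 turns a vanishing costate into a contradiction.

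With $\horizon$ fixed and $\fcost$, $r_F$ affine, transversality only says that $\lambda(\horizon)$ is a linear combination of $\lambda_0\nabla\fcost$ and the rows of $\nabla r_F$. So $\lambda\equiv 0$ gives $\lambda_0\nabla\fcost+(\nabla r_F)^{\top}\xi=0$. This forces $\lambda_0=0$, $\xi=0$ only if $[\nabla\fcost\;\;(\nabla r_F)^{\top}]$ has full column rank. That condition is not among \ref{OCPdata1}--\ref{OCPdata3} or Assumption~\ref{COCP_criteria}, and it fails whenever $\fcost\equiv 0$, as in all of the paper's examples. In the surviving normal case $\lambda_0=1$, $\lambda\equiv 0$, the Hamiltonian only makes $s$ minimize $\rcost$ on $\lcrc{\rho_1}{\rho_2}$ and leaves $u$ free in $\{\kappa(u)\le s,\ Cu\le d\}$. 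Your perturbation to $\max\{\rho_1,\kappa(u)\}$ does nothing precisely in the lossy case $\kappa(u)<\rho_1=s$.

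The gap cannot be closed from the stated hypotheses. Take $\dot x=u$ on $\lcrc{0}{1}$ with $x(0)=0$, $r_F(x)=x$, $\fcost\equiv 0$, $\rcost(s)=s$, $\kappa=\abs{\cdot}$, $\rho_1=1$, $\rho_2=2$, $C=0$, $d=1$, $\admst=\Rbb$. Assumption~\ref{COCP_criteria} holds with $F=0$, $G=1$, since the quadruple $(0,1,0,0)$ has zero output and is controllable. Both problems have value $1$; in \eqref{eq:OCP} it is attained by $u=1$ followed by $u=-1$. Yet $(u,s)\equiv(0,1)$ is optimal for \eqref{eq:COCP} while $0\notin\admcont$.

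You therefore need an extra nondegeneracy hypothesis that excludes a normal multiplier with $\lambda\equiv 0$, and Step 3 must invoke it explicitly. In the lossless-convexification literature this is a full-column-rank transversality condition, typically stated with free final time. There, the condition on the Hamiltonian at the final time, together with $\rcost>0$ on $\lcrc{\rho_1}{\rho_2}$, yields $\lambda_0=0$.

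Independently, your claim in Step 3 that $\lambda$ is analytic contradicts your own Step 1. The multiplier of the path constraint $x(t)\in\admst$ enters the adjoint equation. If you pass to the reduced system through $u=Fx+Gw$ (which is where the quadruple $(A+BF,BG,CF,CG)$ comes from), the multipliers $\mu(t)$ of $Cu\le d$ also enter, through a term of the form $F^{\top}C^{\top}\mu(t)$, and the active set varies in time. So $\lambda$ is only absolutely continuous (or of bounded variation). Strong observability of the dual quadruple then gives $\lambda=0$ only on an \emph{interval} on which the output-nulling condition holds. Two further steps are not automatic and each needs its own argument: passing from a set of positive measure to such an interval, and propagating $\lambda=0$ up to $t=\horizon$, which you need before transversality can be used at all.
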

\begin{rem}
Regardless of the convexification approach used, whether that of \cite{ref:MWHBA-14b}, \cite{ref:YRLX-23}, or \cite{ref:MWH-21}, our method in \S\ref{sec:refSIP} ahead, establishes \emph{exact} solutions of convexified OCPs in continuous time and enforces the uncountably infinite family of constraints in \eqref{eq:COCP}.
\end{rem}

\subsection{Numerically viable reformulation of \eqref{eq:COCP}}
The OCP \eqref{eq:COCP} is inherently infinite-dimensional since \(\cont(\cdot)\) and \(s(\cdot)\) are infinite-dimensional objects. To account for finite arithmetic and digital computer implementations, we proceed via restricting the admissible space to finite-dimensional subspaces, where the control and the slack trajectories are parametrized by functions in \(\pwcfunc_N(\lcrc{0}{\horizon};\Rbb)\); this is a standard approach in numerical optimal control \cite{ref:DasGanAnjCha23CDC}. 

Let \(N \in \N\), and consider a dictionary \(\dict \Let \bigl(\psi_i(\cdot)\bigr)_{i \in \Nz} \subset  \pwcfunc_N(\lcrc{0}{\horizon};\Rbb)\) that serves the purpose of naturally parametrizing the admissible controls (amenable to digital implementation) and slack trajectories. We define the set of admissible generating functions
\begin{align*}
    \admcon_{\dict} \Let \linspan \aset[\big]{ \psi_i:[0,\horizon] \ra \Rbb \suchthat i = 1, \ldots, N },
\end{align*}  
where the piecewise constant and linearly independent generating functions \(\reg_i(\cdot)\) are identified and fixed as  
\begin{align}
\label{eq:param_basis}
    t \mapsto \reg_i(t) &\Let
    \begin{cases}
        1 & \text{if } t \in \left[ (i-1)\frac{T}{N}, i\frac{T}{N} \right[, \\
        0 & \text{otherwise},
    \end{cases}
\end{align}  
for all \(i \in [1;N]\). Note that \(\max_{i \in \Nz}\max_{t}|\psi_i(t)| = 1\) by design. The control trajectory \(\lcrc{0}{\horizon} \ni t \mapsto \cont(t) \in \Rbb^{\dimcon}\) is represented component-wise, where the \(i\)-th component of the control trajectory is a linear combination of generating functions \(t \mapsto \psi_j(t)\) for \(j \in [1;N]\). Define the function \(t \mapsto \Reg(t) \Let \bigl(\psi_1(t),\, \psi_2(t),\;\ldots \;\psi_N(t) \bigr) \in \Rbb^{N}\). Then for each \(i=1,\ldots,\dimcon\) we have the component-wise parametrization
\begin{align} 
    \label{e:pcontrol:1}
    [0,\horizon] \ni t \mapsto \cont^{\dict}_i(t)= \sum_{j=1}^{N}\alpha_{i,j} \psi_j(t) \teL \inprod{\Param_i}{\Reg(t)}, 
\end{align}
where \(\alpha_{i,j} \in \Rbb\) are the control parameters. Consequently, one can write the control trajectory \(t \mapsto \cont^{\dict}(t)\) in the compact form
\begin{align}
    \label{e:cont_param}
    \cont^{\dict}(t) = \begin{pmatrix}
        \Param_{1,1} & \Param_{1,2} & \cdots & \Param_{1,N}\\
        \Param_{2,1} & \Param_{2,2} & \cdots & \Param_{2,N}\\
        \vdots & \vdots & \ddots & \vdots\\
        \Param_{\dimcon,1} & \Param_{\dimcon,2} & \cdots & \Param_{\dimcon,N}
    \end{pmatrix} \hspace{-1mm}\Reg(t) \teL \Param \Reg(t),
\end{align}
with \(\Param \in \Rbb^{\dimcon \times N}\) a coefficient matrix (to be determined). Following the parametrization in \eqref{e:pcontrol:1}, we express the slack variable trajectory \( \lcrc{0}{\horizon} \ni t \mapsto \slack(t)\in \Rbb\) also as a linear combination of the generating functions (keeping the same notation) \(t \mapsto \reg_i(t) \) for \(i \in [1;N]\) 
\begin{align} 
    \label{e:slack_param}
    [0,\horizon] \ni t \mapsto \slack^{\dict}(t)= \sum_{i=1}^{N}\beta_{i} \reg_i(t) \teL \inprod{\beta}{\Reg(t)}, 
\end{align}
where \(\beta_i \in \Rbb\) are the slack coefficients (to be determined). In view of the parametrization \eqref{e:pcontrol:1} and \eqref{e:slack_param}, the control and slack constraints are given by 
\begin{align}
    \label{eq:cont_param_constraint}
    \begin{cases}
        \kappa(\cont^{\dict}(t)) \leq \slack^{\dict}(t),  \,\, C\cont^{\dict}(t) \leq d,\\ 
        \text{and } \rho_1 \leq \slack^{\dict}(t) \leq \rho_2 \,\, \text{for all}\,\, t\in [0,\horizon].
    \end{cases}
\end{align}
Note that the solution of \eqref{eq:sys}, starting from an initial state \(\param \in \Rbb^{\dimst}\), is given by the variation of constants formula \cite[\S 2.3]{ref:AB-20}, i.e., for all \(t \in \lcrc{0}{\horizon}\)
\begin{align}\label{eq:p_sol}
\st(t;\param,\Param) &=  e^{At}\param +  \int_{0}^{t} e^{A(t-\tau)} \left(B \Param \Reg(\tau) + \distfield\right)  \odif{\tau}.
\end{align}
With these components in place, the resulting finitely parametrized optimal control problem is given by
\begin{equation}
	\label{e:fOCP} 
\begin{aligned}
& \hspace{-2mm}\inf_{(\cont^{\dict}(\cdot), \slack^{\dict}(\cdot))}	&& \hspace{-2mm}\objectivedisc\bigl(\param,\cont^{\dict}(\cdot), \slack^{\dict}(\cdot)\bigr) \\
&  \hspace{0mm}\sbjto	&&  \hspace{-2mm}
\text{constraints \eqref{eq:state_constraint_a}, \eqref{eq:state_constraint_b} and \eqref{eq:cont_param_constraint}}.
\end{aligned}
\end{equation}
\begin{assum}
\label{assum:slater_cond}
The OCP \eqref{e:fOCP} is strictly feasible in the sense that Slater's condition holds.
\end{assum}

\begin{rem}\label{rem:slater}
Slater's condition is a technical requirement needed in Assumption \ref{assum:slater_cond} needed to ensure strict feasibility of the OCP \eqref{e:fOCP}, and it is employed in the proof of our main theorems ahead (Theorems \ref{thm:sampling_thm} and \ref{thrm:opt:extract}). Assumption \ref{assum:slater_cond} implies that: there exists \(\bigl(\cont^{\dict}(\cdot), \slack^{\dict}(\cdot)\bigr)\) with \(\cont^{\dict}(\cdot)\) and \(\slack^{\dict}(\cdot)\) as in \eqref{e:pcontrol:1} and \eqref{e:slack_param}, and a nonempty open set \(O \subset \intr{\admst}\), such that 
\begin{align*}
\begin{cases}
\st(t) \in O, \;\st(0)=\param, \;r_F(\st(\tfin))=0,\\
\kappa(\cont^{\dict}(t)) < \slack^{\dict}(t),\, C\cont^{\dict}(t) < d,\\ 
\text{and } \rho_1 < \slack^{\dict}(t) < \rho_2 \text{ for all } t\in \lcrc{0}{\horizon}.
\end{cases}
\end{align*}
\end{rem}
Since \(\param\) appears as a parameter in \eqref{e:fOCP}, we define \(\fsblset\) as the set of initial states \(\param\) for which \eqref{e:fOCP} remains strictly feasible. The optimal value mapping is then given by
\begin{equation}\label{eq:value funcion}
\fsblset \ni \param \mapsto  \valf(\param) \Let \text{the optimal value of \eqref{e:fOCP}}.
\end{equation}
\begin{rem}
We revisit an important issue highlighted in the introduction: Since the OCP \eqref{eq:COCP} is an infinite‑dimensional constrained optimization problem, computing an exact solution is, in general, intractable \cite[Chapter 16, \S I]{ref:AliTsa-21,ref:XiaYangWang-21}. All numerical techniques must employ some form of parametrization to reduce the infinite-dimensional decision space of \eqref{eq:COCP} to a finite-dimensional one. We adopted the component-wise parametrization of the control trajectory afforded by the piecewise constant functions \(\pwcfunc_N(\lcrc{0}{\horizon};\Rbb)\) along the lines of \cite{ref:MWHBA-14a,ref:QLRLKLT-14}, that is well-suited to digital implementation. Since any optimal control trajectory is locally integrable \cite[Chapter 23, \S 23.3]{ref:FC-13} and the family of piecewise‑constant functions is dense in \(L^1(\lcrc{0}{\horizon};\admcont)\) \cite[Chapter 6]{ref:FolReal-99}, for sufficiently large \(N\) it admits an arbitrarily close approximation (in the $L^1$ norm) by elements of \(\pwcfunc_N\bigl(\lcrc{0}{\horizon};\Rbb\bigr)^{\dimcon}\). A key point of departure of our method is the enforcement of the constraint at every instant of time as opposed to at \emph{only} the points of time discretization commonly adopted in the literature. Nevertheless, our results unveil a new technique capable of solving \eqref{e:fOCP} while staying in the finitary regime of memory and computational power. This is the subject of section \S\ref{sec:refSIP}. This feature enables our method to uncover infeasibilities that standard approaches enforcing constraints only at the discretization nodes are liable to miss; see \S\ref{subsec:infeas_fuel_opt} for an important illustration on this front.
\end{rem}

%% file: main_results.tex
\section{Convex SIP formulation and numerical solution}
\label{sec:refSIP}
The finitely parametrized OCP \eqref{e:fOCP} turns out to be a convex semi-infinite program (CSIP), and demonstrating this is the purpose of this section. Furthermore, leveraging the CSIP structure, one can numerically solve \eqref{e:fOCP} in an \emph{exact manner} (see our main results, Theorem \ref{thm:sampling_thm} and Theorem \ref{thrm:opt:extract} ahead for precise statements) using recently developed exact numerical algorithms for solutions to CSIPs. Observe that the constraints in \eqref{eq:cont_param_constraint} after parametrization are
\begin{align}
\label{constr:control}
\begin{cases}
     \inprod{\beta}{\Reg(t)} \in [\rho_1, \rho_2], \,\, C\Param \Reg(t) \leq d, \text{ and} \\
    \kappa(\Param \Reg(t)) \leq \inprod{\beta}{\Reg(t)} \,\, \text{for all}\,\, t\in [0,\horizon].
\end{cases}
\end{align}
The piecewise constant generating functions \((\reg_i(\cdot))_{i=1}^{N}\) grid the time horizon into \(N\) subintervals, ensuring that the control and slack trajectories remain constant within each subinterval. Consequently, the constraints \eqref{constr:control} become time-independent within each such \(N\) many subintervals, the constraints in \ref{constr:control} can be equivalently expressed as
\begin{align}
\label{constr:time_indp}
    \begin{cases}
    \inprod{\beta}{e_k} \in [\rho_1, \rho_2], \,\,C\Param e_k \leq d, \text{ and }\\ 
    \kappa\left(\Param e_k \right) \leq \inprod{\beta}{e_k} \text{ for all } k \in [1;N],
    \end{cases}
\end{align}
where \((e_k)_{k=1}^{N} \subset \Rbb^N\) are the standard canonical basis for \(\Rbb^N\). In a similar fashion, exploiting the piecewise constant nature of the generating functions, we rewrite the parametrized cost function as
\begin{align}
    \label{eq:param_value_func}
\hspace{-2mm}\objectivedisc\bigl(\param,\Param \Reg(\cdot), \inprod{\beta}{\Reg(\cdot)}\bigr) &= \hspace{-1mm} 
    \fcost(\st(\horizon)) + \hspace{-0.5mm}\sum_{i=1}^{N} \hspace{-1mm}\frac{\horizon\rcost\bigl(\inprod{\beta}{e_i}\bigr)}{N}.
\end{align}
For notational simplicity, we define the set of admissible parameters by
\begin{align}\label{eq:control:feas:set}
     \Paramset \Let  \left\{(\Param, \beta) \middle\vert  
        \text{ constraints in \eqref{constr:time_indp} holds}
        \right\}.
\end{align}
Then \eqref{e:fOCP} can be equivalently recast as the following CSIP:
\begin{equation}
    \label{e:r_SIP}
\begin{aligned}
    &\inf_{(\Param, \beta) \in \Rbb^{\dimcon \times N} \times \Rbb^N}&& \objectivedisc\bigl(\param,\Param \Reg(\cdot), \inprod{\beta}{\Reg(\cdot)}\bigr)\\
    &  \sbjto	&&  
    \text{constraints \eqref{eq:state_constraint_a}, \eqref{eq:state_constraint_b}} \text{ and } (\Param, \beta) \in \Paramset.
\end{aligned}
\end{equation}
Our primary focus is on solving problem \eqref{e:r_SIP}.\footnote{Recall that this problem includes the requirement that the path constraint \(\st(t) \in \admst\) must hold for uncountably many \(t \in \lcrc{0}{\horizon}\).} Before proceeding towards one of our main results, we show that the feasible set corresponding to the CSIP \eqref{e:r_SIP} admits a nice structure. 

\subsection{Technical results}\label{subsec:tech:results}
The next lemma contains some qualitative properties of the CSIP \eqref{e:r_SIP} and its feasible set, which will be useful for our subsequent analysis.
\begin{thm}\label{lemm:feas:conv:comp}
Fix \(\param \in \fsblset\) and consider the OCP \eqref{e:fOCP} and the finitely parametrized CSIP \eqref{e:r_SIP}. The feasible set for the CSIP \eqref{e:r_SIP}, defined by
\begin{align}
  \feasSip \Let  \left\{\hspace{-1mm}(\Param, \beta) \middle\vert \;
\begin{array}{@{}l@{}}
       \st(0) = \param,\, r_F(\st(\horizon)) = 0,\,
        \inprod{\beta}{e_k} \in [\rho_1, \rho_2], \\ C\Param e_k \leq d,\, 
    \kappa\left(\Param e_k \right) \leq \inprod{\beta}{e_k} \text{ for all }\\k \in [1;N] \text{ and }x(t) \in \admst \text{ for all }t \in \lcrc{0}{\horizon}
        \end{array}
        \right\},\nn
\end{align}
is compact and convex. Moreover, the CSIP \eqref{e:r_SIP} admits a feasible solution over \(\feasSip\). 
\end{thm}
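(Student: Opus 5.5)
The plan is to establish convexity, closedness, boundedness and nonemptiness of \(\feasSip\) separately.

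\emph{Convexity and closedness.} The central observation is that the state map \((\Param,\beta)\mapsto \st(t;\param,\Param)\) in \eqref{eq:p_sol} is affine in \(\Param\) for each fixed \(t\). Writing \(M(t)\Param \Let \int_0^t e^{A(t-\tau)}B\Param\Reg(\tau)\,\odif{\tau}\), each piece of \(\feasSip\) then has a simple form.
\begin{itemize}
\item \(r_F(\st(\horizon))=0\) is affine, since \(r_F\) is affine by \ref{OCPdata3}, so its solution set is a closed affine subspace.
\item \(\st(t)\in\admst\) for each fixed \(t\) is the preimage of the linear subspace \(\admst\) under an affine map, hence closed and convex. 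The family over \(t\in\lcrc{0}{\horizon}\) is an intersection of such sets, which stays closed and convex even though it is uncountable.
\item \(\inprod{\beta}{e_k}\in[\rho_1,\rho_2]\) and \(C\Param e_k\le d\) are finitely many closed half-spaces.
\item \(\kappa(\Param e_k)\le \inprod{\beta}{e_k}\) is a sublevel set of the convex continuous function \((\Param,\beta)\mapsto \kappa(\Param e_k)-\beta_k\), using that \(\kappa\) is convex and Lipschitz by \ref{OCPdata1}. It is therefore closed and convex.
\end{itemize}
Intersecting all of these gives that \(\feasSip\) is closed and convex.

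\emph{Boundedness.} The \(\beta\)-components satisfy \(\beta_k=\inprod{\beta}{e_k}\in[\rho_1,\rho_2]\), so \(\beta\) lies in a box. For \(\Param\), the column \(\Param e_k\) satisfies \(\kappa(\Param e_k)\le\rho_2\). Since \(\kappa\) is near-monotone, the sublevel set \(\{\xi\in\Rbb^{\dimcon}\mid \kappa(\xi)\le\rho_2\}\) is bounded. It is also closed by continuity, hence compact. Each column of \(\Param\) therefore lies in a fixed compact set, so \(\Param\) is bounded. Combined with closedness, Heine--Borel gives compactness of \(\feasSip\) in the finite-dimensional space \(\Rbb^{\dimcon\times N}\times\Rbb^N\).

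\emph{Nonemptiness and existence of a solution.} Since \(\param\in\fsblset\), Assumption \ref{assum:slater_cond} and Remark \ref{rem:slater} supply a strictly feasible parametrized pair \((\cont^{\dict},\slack^{\dict})\). Its coefficient pair \((\Param,\beta)\) lies in \(\feasSip\), because strict inequalities imply the non-strict ones and \(O\subset\admst\). Thus the CSIP \eqref{e:r_SIP} admits a feasible solution. The objective \eqref{eq:param_value_func} is continuous in \((\Param,\beta)\): \(\fcost\) is affine, \(\st(\horizon)\) is affine in \(\Param\), and \(\rcost\) is locally Lipschitz. By Weierstrass, the infimum over the nonempty compact set \(\feasSip\) is attained, which strengthens the feasibility claim to existence of an optimizer.

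\emph{Main obstacle.} The only delicate step is the boundedness of \(\Param\). It relies entirely on near-monotonicity of \(\kappa\) to turn \(\kappa(\Param e_k)\le\rho_2\) into a norm bound. The linear constraints \(C\Param e_k\le d\) alone need not be bounded, so this argument must not lean on them. The uncountable state constraint poses no difficulty for convexity or closedness, since arbitrary intersections preserve both properties.
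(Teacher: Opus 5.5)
Your proposal is correct and follows essentially the same route as the paper's proof. You decompose $\feasSip$ into the terminal, slack-bound, linear-control, $\kappa$-epigraph and pointwise state constraints, each closed and convex (the last intersected over $t\in\lcrc{0}{\horizon}$); you get boundedness from near-monotonicity of $\kappa$ together with $\beta_k\le\rho_2$, and you finish with Weierstrass. The only small difference is that you derive nonemptiness of $\feasSip$ explicitly from $\param\in\fsblset$ and Assumption~\ref{assum:slater_cond}, which the paper leaves implicit before invoking Weierstrass.
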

\begin{pf}
Fix \(t \in \lcrc{0}{\horizon}\) and let
\begin{align}
  \feasSip^{t} \Let  \left\{\hspace{-1mm}(\Param, \beta) \middle\vert \;
\begin{array}{@{}l@{}}
       \st(0) = \param,\, r_F(\st(\horizon)) = 0,\,
        \inprod{\beta}{e_k} \in [\rho_1, \rho_2], \\ C\Param e_k \leq d,\, 
    \kappa\left(\Param e_k \right) \leq \inprod{\beta}{e_k} \text{ for all }\\k \in [1;N] \text{ and }x(t) \in \admst 
        \end{array}
        \right\}.\nn
\end{align}
We make the following observations concerning the constraints in \(\feasSip^{t}\):
\begin{enumerate}[label=\textup{L-\alph*)}, leftmargin=*, widest=b, align=left]
\item \label{lemm:feas:1} The set defined by 
\[\feasSip_1 \Let \aset[\big]{(\Param,\beta) \suchthat r_F\bigl(\st(\horizon;\param,\Param)\bigr) = 0}.\]
is closed and convex. Indeed, recall from \eqref{eq:p_sol} that 
\begin{align}
\st(T;\param,\Param) &=  e^{AT}\param +  \int_{0}^{T} \epower{A(T-\tau)} \left(B \Param \Reg(\tau) + \distfield \right)  \odif{\tau} \nn \\ & = C_0 + \mathcal{L}(\Param),\nn
\end{align}
where \(C_0\) is a constant and \(\Param \mapsto \mathcal{L}(\Param)\Let  \int_{0}^T \epower{A(T-\tau)} B \Param \Reg(\tau) \odif{\tau}\) is affine. 
Convexity of \(\feasSip_1\) follows readily from the fact that \(\Param \mapsto r_F \circ \st(\horizon;\param,\Param)\) is affine since \(r_F(\cdot)\) is.

\item \label{lemm:feas:2} Define the set 
\[\feasSip_2 \Let \aset[\big]{(\Param,\beta) \suchthat \kappa(\Param e_k) \le \inprod{\beta}{e_k} \text{ for }k \in [1;N]}.\]
For each \(k \in [1;N]\) consider the set \(\feasSip_2^{k} \Let \aset[]{(\Param,\beta) \suchthat \kappa(\Param e_k) \le \inprod{\beta}{e_k}}\) and notice that 
\begin{align}
   \feasSip_2 = \bigcap_{k=1}^{N} \feasSip_2^{k} = \bigcap_{k=1}^N \aset[\big]{(\Param,\beta) \suchthat f_k(\Param,\beta) \le 0},\nn
\end{align}
where \((\Param,\beta) \mapsto f_k(\Param,\beta) \Let \kappa(\Param e_k) - \inprod{\beta}{e_k}.\) Due to continuity of \(\Param \mapsto \kappa \circ \Param e_k\) and \(\beta \mapsto \inprod{\beta}{e_k}\) the continuity of \(f_k(\cdot,\cdot)\) follows. Thus \( \feasSip_2  = \bigcap_{k=1}^N f_k^{-1}\bigl(\lorc{-\infty}{0}\bigr)\) is closed. Convexity of \( \feasSip_2\) follows immediately.

\item \label{lemm:feas:3} Recall from the problem data \ref{OCPdata1} that \(C \in \Rbb^{m \times \dimcon} \), \( d \in \Rbb^m \), and define the set 
\[\feasSip_3 \Let \aset[\big]{(\Param,\beta) \suchthat C\Param e_k \le d \text{ for }k \in [1;N]},\]
and observe that it is the intersection of finitely many closed and convex half spaces, i.e.,
\begin{align}
    \feasSip_3 = \bigcap_{k=1}^{N}\bigcap_{i=1}^m \aset[\big]{(\Param,\beta) \suchthat [C\Param e_k]_i \le d_i}\nn
\end{align}
and thus it is closed and convex. 

\item \label{lemm:feas:4} Given that \(\admst\) is linear subspace of the \(\Rbb^{d_x}\) and \(\Param \mapsto \st(t;\param,\Param)\) is affine, closedness and convexity of
\begin{align}
   \feasSip_4^t \Let \aset[\big]{(\Param,\beta) \suchthat \st(t;\param,\Param) \in \admst} = \st(t;\param,\cdot)^{-1}\{\admst\} \nn
\end{align}
is immediate. 

\item \label{lemm:feas:5} \( \feasSip_5 \Let \aset[\big]{(\Param,\beta) \suchthat \rho_1 \le \inprod{\beta}{e_k} \le \rho_2 \text{ for }k\in [1;N]}\) is closed and convex by definition. 
\end{enumerate}
\vspace{1mm}
Thus, the set \(\feasSip^t\) being the intersection of the sets \(\feasSip_1,\feasSip_2,\feasSip_3,\feasSip^t_4 \,(\text{the only }t \text{-dependent set}),\) and \(\feasSip_5\) is closed and convex. 

Note that for \(k\in [1;N]\), \(\kappa(\Param e_k) \le \inprod{\beta}{e_k} \in \lcrc{\rho_1}{\rho_2}\) implies, along with the near-monotone property of \(\kappa(\cdot)\), that \(\Param e_k\) takes values in a bounded set which enforces \(\Param\) be to bounded. Moreover, the constraint \(\inprod{\beta}{e_k} \in \lcrc{\rho_1}{\rho_2}\) implies that \(\norm{\beta} \le \ol{C} \max_{k} \abs{\inprod{\beta}{e_k}} \le \ol{C}\rho_2\) for some constant \(\ol{C}>0\); so \(\beta\) is bounded. The other constraints are either affine or linear in the \((\Param,\beta)\) pair and they cannot push \((\Param,\beta)\) from \(\aset[\big]{ \Param \suchthat \kappa(\Param e_k) \le \rho_2} \times \aset[\big]{\beta \suchthat \inprod{\beta}{e_k} \in \lcrc{\rho_1}{\rho_2}}\). Consequently, the feasible set \(\feasSip = \bigcap_{t \in \lcrc{0}{\horizon}}\feasSip^t\) is closed, bounded (hence compact), and convex. The proof of the first assertion is complete.

\begin{enumerate}[label=\textup{L-\alph*)}, leftmargin=*, widest=ii, start=6, align=left]
\item \label{lemm:feas:6} (Convexity and continuity of the objective function in \eqref{e:r_SIP}) Since \(\Param \mapsto \st(\horizon;\param,\Param)\) is affine and \(\fcost(\cdot)\) is affine by hypothesis, the mapping \(\feasSip \ni (\Param,\beta) \mapsto \fcost \circ \st(\horizon;\param,\Param)\) is affine and continuous. Moreover, due to the convexity of the map \(\beta \mapsto \inprod{\beta}{e_i}\) for all \(i=1,\ldots,N\) and convexity of \(\rcost(\cdot)\), the mapping \(\feasSip \ni (\Param,\beta) \mapsto \rcost\bigl(\inprod{\beta}{e_i}\bigr)\) also is convex and continuous. 
\end{enumerate}
\vspace{1mm}
The second assertion follows readily from the Weierstrass theorem \cite[Theorem 2.30]{ref:beck2014introduction}. \(\qedwhite\)
\end{pf}

We now step towards our algorithmic developments for solving the CSIP \eqref{e:r_SIP}. To this end, we introduce a relaxed formulation of \eqref{e:r_SIP}. Define
\begin{align}\label{dvar}
\dvar \Let \dimcon N + N = \text{dim. of decision space of } \eqref{e:r_SIP}.
\end{align}
For every fixed parameter \(\param \in \fsblset\), define the map \[[0,\horizon]^{\dvar} \ni \bigl(t^1,\ldots,t^{\dvar}\bigr) \teL \tseq \mapsto \gfunc(\tseq; \param) \in \Rbb\] 
defined by
\begin{align}
\label{eq:g_func}
\gfunc(\tseq; \param) \Let 
\inf_{(\Param, \beta)} \quad 
&\objectivedisc\bigl(\param,\Param \Reg(\cdot), \inprod{\beta}{\Reg(\cdot)}\bigr) \nn\\
\sbjto \quad
&\begin{cases}
\text{constraints \eqref{eq:state_constraint_a}},\;\; \st(t^i;\param,\Param) \in \admst \\
\text{for all } i \in [1;\dvar],\; (\Param, \beta) \in \Paramset.
\end{cases}
\end{align}
Recall that \(\Paramset\) in \eqref{eq:control:feas:set} involves finitely many constraints in \eqref{eq:g_func}, whereas the path constraints are required to be satisfied at finitely many instants \((t^1,\ldots,t^{\dvar}) \in \lcrc{0}{\horizon}^{\dvar}\) instead of the uncountable family which appears in \eqref{e:r_SIP}. The following is our key result, which establishes a numerically viable method for the lossless extraction of the optimal value of the CSIP \eqref{e:r_SIP} by involving \eqref{eq:g_func} in the following way.
\begin{thm}
\label{thm:sampling_thm}
Consider the optimal control problem \eqref{e:fOCP}, the CSIP \eqref{e:r_SIP}, and the convex program \eqref{eq:g_func} with their corresponding notations and data \ref{OCPdata1}-\ref{OCPdata3}. Suppose that Assumption \ref{assum:slater_cond} is satisfied. Consider the maximization problem
\begin{align}\label{eq:sup_sip}
\sup_{\tseq \in [0,T]^{\dvar}} \gfunc(\tseq;\param) \quad \text{for }\param \in \admst_{\horizon}.
\end{align}
Then:
\begin{enumerate}[label=(\roman*)]
    \item (Regularity of \(\gfunc(\cdot;\param)\)) \label{sup_sup_1} \( \lcrc{0}{\horizon}^{\dvar} \ni \bigl(t^1,\ldots,t^{\dvar}\bigr) \teL \tseq \mapsto \gfunc(\tseq; \param) \in \Rbb\) is continuous for every \(\param \in \fsblset\),
    \item (Existence of optimizers) \label{sup_sup_2} there exists an optimizer \(\tseq\as(\param)\) that solves \eqref{eq:sup_sip}, and
    \item \label{sup_sup_3} (Exact solution) \(\valf(\param) = \gfunc(\tseq^{\ast}(\param);\param),\) where \(\valf(\cdot)\) is defined in \eqref{eq:value funcion}.
\end{enumerate}
\end{thm}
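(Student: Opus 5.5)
We will prove the three assertions in order. For \ref{sup_sup_1}, the plan is to view \(\gfunc(\cdot;\param)\) as the optimal value of a parametric convex program. Its objective \((\Param,\beta)\mapsto\objectivedisc\) does not depend on \(\tseq\), and Theorem \ref{lemm:feas:conv:comp} (item \ref{lemm:feas:6}) shows it is continuous and convex. Only the constraints \(\st(t^i;\param,\Param)\in\admst\), \(i\in[1;\dvar]\), depend on \(\tseq\), and they do so through the map \((t,\Param)\mapsto \st(t;\param,\Param)\). By \eqref{eq:p_sol} this map is jointly continuous in \((t,\Param)\) and affine in \(\Param\).

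We then apply Berge's maximum theorem to the feasible-set correspondence \(\tseq\mapsto \mathcal{F}(\tseq)\). This set is cut out by \(\Paramset\), by the constraints \eqref{eq:state_constraint_a}, and by the sampled path constraints. The steps are as follows.
\begin{enumerate}[label=(\alph*)]
\item \(\mathcal{F}(\tseq)\) is contained in the fixed compact set \(\Paramset\cap\feasSip_1\). Compactness of this set follows from the boundedness argument in the proof of Theorem \ref{lemm:feas:conv:comp}.
\item Upper semicontinuity holds because the graph of \(\mathcal{F}\) is closed: \(\admst\) is a closed subspace and \(\st\) is continuous.
\item Lower semicontinuity is where Assumption \ref{assum:slater_cond} is used. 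The Slater point \((\Param_s,\beta_s)\) satisfies \(\st(t;\param,\Param_s)\in O\subset\intr\admst\) for every \(t\), so it is feasible for every \(\tseq\). Given a feasible \((\Param,\beta)\in\mathcal{F}(\tseq_0)\) and \(\tseq_n\to\tseq_0\), we will use convex combinations \((1-\lambda_n)(\Param,\beta)+\lambda_n(\Param_s,\beta_s)\) with \(\lambda_n\downarrow 0\). By continuity in \(t\), these should be feasible for \(\tseq_n\).
\end{enumerate}
Continuity of \(\gfunc(\cdot;\param)\) then follows from Berge's theorem, and its values are finite by Weierstrass.

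Assertion \ref{sup_sup_2} is immediate from \ref{sup_sup_1}: \([0,\horizon]^{\dvar}\) is compact and \(\gfunc(\cdot;\param)\) is continuous, so the supremum is attained at some \(\tseq\as(\param)\).

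For \ref{sup_sup_3}, one inequality is easy. For every \(\tseq\), problem \eqref{eq:g_func} is a relaxation of \eqref{e:r_SIP}, so \(\gfunc(\tseq;\param)\le\valf(\param)\) and hence \(\gfunc(\tseq\as;\param)\le\valf(\param)\). For the reverse inequality we invoke the classical finite-reduction theorem for convex semi-infinite programs (a Helly-type result, in the style of Borwein / Levin / Bertsimas--Calafiore). The hypotheses are a convex problem in \(\dvar\) variables, an index set \([0,\horizon]\) that is compact, constraint functions continuous in \(t\) and convex in the decision variable, and a Slater point. Under these, there exist \(\dvar\) indices \(\bar t^1,\dots,\bar t^{\dvar}\) whose constraints alone give the same optimal value, i.e.\ \(\gfunc(\bar\tseq;\param)=\valf(\param)\). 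Then
\[
\valf(\param)=\gfunc(\bar\tseq;\param)\le\gfunc(\tseq\as;\param)\le\valf(\param).
\]

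We will rewrite \(\st(t)\in\admst\) as the finitely many linear equalities \(P^\perp\st(t;\param,\Param)=0\), where \(P^\perp\) is the projection onto \(\admst^\perp\). This raises a subtlety: a subspace constraint has empty interior, so Slater's condition and the Helly count need care. In the reduction we will therefore treat the equality constraints, including \(r_F\), as restricting to an affine subspace, apply Helly's theorem within that subspace, and use the Slater point of Remark \ref{rem:slater} only for the inequality constraints. The same care applies to the lower-semicontinuity step (c) above. The dimension count must come out at most \(\dvar\); this is the main obstacle. To get there we will first try the standard argument: at the optimal level, the sets defined by the level-set constraint \(\objectivedisc<\valf(\param)\) together with each individual \(t\)-constraint have empty common intersection. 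Helly's theorem in \(\Rbb^{\dvar}\) then extracts \(\dvar\) of them, with the level-set constraint accounting for the extra set.
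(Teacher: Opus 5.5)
Your three steps follow the paper's proof. Part (i) uses a Berge-type argument: the paper gets continuity of the feasible-set map from the Slater-based Example 3B.4 of \cite{ref:DonRoc-14}, then continuity of the value from Theorem 3B.5 there. Part (ii) is Weierstrass, and part (iii) is the CSIP finite-reduction theorem \cite[Theorem 1]{ref:DasAraCheCha-22}. The paper never meets your subspace difficulty because it uses Assumption~\ref{assum:slater_cond} exactly as written in Remark~\ref{rem:slater}, with a nonempty open \(O\subset\intr\admst\). For a linear subspace this forces \(\admst=\Rbb^{\dimst}\). The path constraints are then vacuous, \(\gfunc(\cdot;\param)\equiv\valf(\param)\), and your (a)--(c) hold trivially; under that reading your plan is the paper's proof. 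You are right that a proper subspace breaks this. The gap is that in that regime, with Slater taken relative to \(\admst\), the deferred ``care'' in (c) cannot be supplied.

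First, the convex-combination trick cannot restore equalities. With \(\Param_\lambda\Let(1-\lambda_n)\Param+\lambda_n\Param_s\) you get \(P^\perp\st(t^i_n;\param,\Param_\lambda)=(1-\lambda_n)P^\perp\st(t^i_n;\param,\Param)\), which need not vanish, because \((\Param,\beta)\) satisfies the constraint only at \(t^i_0\).

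Second, and decisively, \(\mathcal{F}\) is genuinely not lower semicontinuous. In the landing example \(r_1(0)=r_2(0)\) and \(\dot r_1(0)=\dot r_2(0)\), so \(r_1(t)-r_2(t)=\tfrac12(\Param_{1,1}-\Param_{2,1}+3.71)t^2\) on \(\lcrc{0}{\horizon/N}\). Hence every sample \(t\in\lorc{0}{\horizon/N}\) imposes the hyperplane \(\Param_{1,1}-\Param_{2,1}=-3.71\), while \(t=0\) imposes nothing since \(\st(0)=\param\in\admst\). Along \(\tseq_n=(1/n,0,\ldots,0)\to 0\) the sets \(\mathcal{F}(\tseq_n)\) are one fixed set strictly smaller than \(\mathcal{F}(0)\). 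So \(\gfunc\) jumps at \(0\) whenever no path-free minimizer satisfies that relation. Merging sample times likewise produce derivative constraints in the limit.

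Your (a)--(b) therefore give only lower semicontinuity of \(\gfunc(\cdot;\param)\), which is the wrong half for a maximization. In this regime (i) is false in general, and (ii) must not be derived from it.

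Your Helly sketch for (iii) has a parallel hole. The strict level set \(\{\objectivedisc<\valf\}\) is open, so passing from the uncountable family to a finite one needs the compact sets \(\{\objectivedisc\le\valf-\eps\}\), intersected with your compact set from (a). That yields only \(\sup_{\tseq}\gfunc(\tseq;\param)=\valf(\param)\). Attainment would again need upper semicontinuity of \(\gfunc\). The Slater-based Borwein--Levin version needs interior points, which a subspace constraint never has.

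What survives, more simply than via Helly, are (ii) and (iii).
\begin{itemize}
\item Each \(E_t\Let\{(\Param,\beta):\st(t;\param,\Param)\in\admst\}\) is an affine subspace of \(\Rbb^{\dvar}\), and \(E\Let\bigcap_{t\in\lcrc{0}{\horizon}}E_t\) is nonempty because \eqref{e:r_SIP} is feasible.
\item Add sample times one at a time, each strictly shrinking the current intersection. All these intersections contain \(E\), so each step lowers the dimension by at least one.
\item After \(m\le\dvar\) steps (indeed \(m\le\dimcon N\), since \(\beta\) does not enter \(\st\)) you reach \(\bar t^1,\ldots,\bar t^m\) with \(\bigcap_i E_{\bar t^i}=E\).
\item Padding by repetition gives \(\bar\tseq\in\lcrc{0}{\horizon}^{\dvar}\) for which \eqref{eq:g_func} has exactly the feasible set \(\feasSip\) of \eqref{e:r_SIP}.
\end{itemize}
Hence \(\gfunc(\bar\tseq;\param)=\valf(\param)\ge\gfunc(\tseq;\param)\) for every \(\tseq\). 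This proves (ii) and (iii) with no Slater condition, no continuity and no Helly count; only (i) really needs interior points of \(\admst\).
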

\begin{rem}
\label{rem:on_reformulation}
The assertions in Theorem \ref{thm:sampling_thm} are of two types --- qualitative and quantitative. \ref{sup_sup_1} and \ref{sup_sup_2} are qualitative, and are concerned with the regularity of \(\gfunc(\cdot;\param)\) that are useful for the selection of numerical algorithms. \ref{sup_sup_3} provides the quantitative angle --- equality of values of \eqref{e:fOCP} and \eqref{eq:sup_sip}. More precisely:

\begin{itemize}[leftmargin=*, label = \(\circ\)]
     \item Assertion \ref{sup_sup_1} establishes the continuity of the mapping \(\gfunc(\cdot;\param)\) for \(\param \in \fsblset\), which is important for good numerical performance of global optimization algorithms that may be employed to solve \eqref{eq:sup_sip}; see the algorithmic architecture \(\glbopt\) in \S\ref{subsec:algo} ahead. Under additional smoothness assumptions on the constraint sets (such as sublevel sets of smooth functions or polytopes), it can be shown that \(\gfunc(\cdot;\param)\) is Lipschitz continuous \cite{ref:FiaIsh-90}, which enables an array of global optimization algorithms especially designed for the class of Lipschtiz functions; see  \S\ref{subsec:algo}. 
     
    \item The convex program \eqref{eq:g_func} is a relaxed version of \eqref{e:r_SIP}, having finitely many constraints on the paths; assertion \ref{sup_sup_2} in Theorem \ref{thm:sampling_thm} asserts that it suffices to enforce the path constraints at a carefully selected \(\dvar\)-tuple \(\tseq\) to obtain an \emph{exact} solution of the CSIP \eqref{e:r_SIP}, i.e., \(\valf(\param) = \gfunc(\tseq^{\ast};\param)\). To determine the optimal \(\dvar\)-tuple, the maximization problem \eqref{eq:sup_sip} in Theorem \ref{thm:sampling_thm} must be (globally) solved over \(\lcrc{0}{\horizon}^{\dvar}\).
\end{itemize}
\vspace{1mm}
  Our second main result, Theorem \ref{thrm:opt:extract} will demonstrate that by employing a regularization process one can construct a minimizing sequence that comprises of solutions of suitably regularized revisions of \eqref{e:r_SIP}, that converge to an optimizer of the CSIP \eqref{e:r_SIP} as the regularization parameter \(\downarrow 0\). To the best of our knowledge, this is the only numerically viable algorithm that \emph{exactly} solves the class of OCPs \eqref{e:fOCP}, ensuring that the optimal values of the original problem and the relaxed problem \eqref{eq:g_func} coincide, with the optimizers of the relaxed problem also solving the original CSIP.
\end{rem}


\begin{rem}[On exactness and conservatism]
Note that, precisely, there are two layers: 
\begin{itemize}[label= \(\circ\), leftmargin=*]
\item  \textbf{Layer one} is a modeling layer, where we choose a finite dictionary and thereby restrict the admissible spaces of control and slack. This is a standard route in numerical optimal control and the parametrization can be refined by increasing \(N\) if higher fidelity is needed.
\item \textbf{Layer two} is a robust optimization layer, where, given this finite parametrization, we address the semi-infinite robust constraints \emph{exactly} via the CSIP formulation and Theorems \ref{thm:sampling_thm} and \ref{thrm:opt:extract} (ahead). The term, ``exact'' in this layer means: the established CSIP-based procedure recovers the original optimal value (Theorem \ref{thm:sampling_thm}) and the optimizer of \eqref{e:r_SIP} (Theorem \ref{thrm:opt:extract}).
\end{itemize}
All our claims of \emph{no conservatism} and \emph{exactness} always refer to the second indicated layer. The claim of exactness means that, for the infinitely constrained (but finitely parameterized) OCP \eqref{e:r_SIP} arising from the first layer, both the optimal value and the optimizers coincide with those of the finitely constrained convex program \eqref{eq:sup_sip}.
\end{rem}

\begin{pf}[Proof of Theorem \ref{thm:sampling_thm}]
Fix \(\param \in \fsblset\); we show the continuity of the mapping \(\lcrc{0}{\horizon}^{\dvar} \ni \bigl(t^1,\ldots,t^{\dvar}\bigr) \teL \tseq \mapsto \gfunc(\tseq; \param) \in \Rbb\) employing results from variational analysis \cite[Chapter 3]{ref:DonRoc-14}. 

Fix \(\param \in \fsblset\) and \((\ol{\Param},\ol{\beta},\ol{\tseq}) \in \feasSip \times \lcrc{0}{\horizon}^{\dvar}\). Observe that: 
\begin{enumerate}[label=\textup{T-\alph*)}, leftmargin=*, widest=ii, align=left]

\item \label{thrm:point:a} The constraint mappings in the feasible set \(\feasSip\), i.e., the maps  \((\Param,\beta)\mapsto r_F(\st(\horizon;\param,\Param))\), \((\Param,\beta)\mapsto\inprod{\beta}{e_k}\), \((\Param,\beta)\mapsto C\Param e_k-d\), \((\Param,\beta)\mapsto\kappa\left(\Param e_k \right) -\inprod{\beta}{e_k}\) for all \(k \in [1;N]\), and \((\Param,\beta)\mapsto x(t^i;\param,\Param)\) for all \(i \in [1;\dvar]\) are continuous due to the problem data and their definitions.

\item \label{thrm:point:b} Moreover, the constraint mappings in \ref{thrm:point:a} are convex (or affine, depending on their images) in the decision variable \((\Param,\beta)\) for each fixed \(\tseq \in \lcrc{0}{\horizon}^{\dvar}\). See the steps \ref{lemm:feas:1} through \ref{lemm:feas:5} in the proof of Theorem \ref{lemm:feas:conv:comp}.
\end{enumerate}
\vspace{1mm}
Consequently, the feasible set mapping for the problem \eqref{eq:g_func} is continuous at \(\ol{\tseq}\) in the sense of Painlev\'e-Kuratowski \cite[Chapter 3, \S 3.2]{ref:DonRoc-14}. This follows from \cite[Chapter 3, Example 3B.4]{ref:DonRoc-14} because of the properties \ref{thrm:point:a}--\ref{thrm:point:b} along with the Slater-like condition in Assumption \ref{assum:slater_cond}. To see the continuity of \(\gfunc(\cdot;\param)\) for a fixed \(\param \in \fsblset\), we will employ the regularity result \cite[Chapter 3, Theorem 3B.5]{ref:DonRoc-14}. To this end,
\begin{enumerate}[label=\textup{T-\alph*)}, leftmargin=*, widest=ii, start=3, align=left]

\item \label{thrm:point:c} recall that \(\param \in \fsblset\) and \((\ol{\Param},\ol{\beta},\ol{\tseq}) \in \feasSip \times \lcrc{0}{\horizon}^{\dvar}\) are fixed; the feasible set of \eqref{eq:g_func} is nonempty and bounded via Theorem \ref{lemm:feas:conv:comp};

\item \label{thrm:point:d} the cost function \(\feasSip \times \lcrc{0}{\horizon}^{\dvar} \ni (\ol{\Param},\ol{\beta},\ol{\tseq}) \mapsto \objectivedisc(\param,\Param\Reg(\cdot),\inprod{\beta}{\Reg(\cdot)})\) in \eqref{eq:g_func} is continuous;

\item \label{thrm:point:e} the feasible set for \eqref{eq:g_func} is continuous as a set-valued map as shown above, and thus, it is Pompeiu-Hausdorff continuous at \(\ol{\tseq}\in \lcrc{0}{\horizon}^{\dvar}\). 
\end{enumerate}
\vspace{1mm}
The hypotheses of \cite[Chapter 3, Theorem 3B.5]{ref:DonRoc-14} are, therefore, verified, which gives us the continuity of the value function mapping \(\tseq \mapsto \gfunc(\tseq;\param)\) for fixed \(\param \in \fsblset\) over the domain \(\lcrc{0}{\horizon}^{\dvar}\). Thus, part-\ref{sup_sup_1} of Theorem \ref{thm:sampling_thm} is established. 

Since \(\lcrc{0}{\horizon}^{\dvar}\) is compact, by Weierstrass's Theorem \cite[Theorem 2.30]{ref:beck2014introduction}, assertion \ref{sup_sup_2} follows.

We now establish assertion \ref{sup_sup_3}, for which all the necessary ingredients have been prepared in the preceding analysis. For the CSIP \eqref{e:r_SIP}, observe that: 
\begin{enumerate}[label=\textup{T-\alph*)}, leftmargin=*, widest=ii, start=6, align=left]
\item \label{thrm:csip:cond:1} The domain \(\feasSip \subset \Rbb^{\dimcon \times N} \times \Rbb^{N}\) of the CSIP \eqref{e:r_SIP} is closed and convex with nonempty interior; see Theorem \ref{lemm:feas:conv:comp}. 
\item \label{thrm:csip:cond:2} From Assumption \ref{assum:slater_cond} we have that the CSIP \eqref{e:r_SIP} is strictly feasible and the admissible set has a nonempty interior. 
\item \label{thrm:csip:cond:3} The objective function is convex and continuous; see point \ref{lemm:feas:6} of Theorem \ref{lemm:feas:conv:comp}. 
\item \label{thrm:csip:cond:4} The constraint mappings are continuous in \((\Param,\beta, t) \in \feasSip \times \lcrc{0}{\horizon}\) and convex in the decision variables \((\Param,\beta) \in \feasSip\) for each fixed \(t \in \lcrc{0}{\horizon}\). This has already been established in Theorem \ref{lemm:feas:conv:comp} and \ref{thrm:point:a}--\ref{thrm:point:b}. 
\item \label{thrm:csip:cond:5} The constraint index set \(\lcrc{0}{\horizon}\), i.e., the set of semi-infinite variables is compact. 
\end{enumerate}
Thus, the problem data ((1.1)-a)--((1.1)-e) in \cite{ref:DasAraCheCha-22} are all satisfied. From \cite[Theorem 1]{ref:DasAraCheCha-22} it follows that for each \(\param \in \fsblset\), \(\valuefunc(\param) = \gfunc(\tseq^{\ast}(\param);\param)\). \(\qedwhite\)
\end{pf}

Our next result establishes a way to extract the optimizers of the SIP \eqref{e:r_SIP} via solving a sequence of regularized problems. Let \(\eta: \Rbb^{\dimcon \times N} \times \Rbb^N \rightarrow \Rbb\) be a continuous, positive, and strictly convex function. Consider the CSIP \eqref{e:r_SIP} with the objective 
\[\objective^{\eps}\Let \objectivedisc\bigl(\param,\Param \Reg(\cdot), \inprod{\beta}{\Reg(\cdot)}\bigr) + \eps \eta(\Param,\beta),\]
with the same set of constraints and \(\eps>0\) is a regularization parameter.\footnote{Of course \(\objective^{\eps}\) depends on \(\param, \eps,\Param \Reg(\cdot)\), and \(\inprod{\beta}{\Reg(\cdot)}\), which we suppress for a cleaner notation.} For every fixed parameter \(\param \in \fsblset\) and \(\eps>0\), define the map \(\lcrc{0}{\horizon}^{\dvar} \ni \bigl(t^1,\ldots,t^{\dvar}\bigr) \teL \tseq \mapsto \gfunc(\tseq;\param,\eps) \in \Rbb\) given by
    \begin{align}\label{eq:g_func_regu}
        \gfunc(\tseq; \param,\eps) \Let \inf_{(\Param, \beta)}	\quad &\objective^{\eps}\nn\\
        \sbjto \quad & \begin{cases}
        \text{constraints \eqref{eq:state_constraint_a}},\,\, \st(t^i;\param,\Param) \in \admst\\
        \text{for all }\, i\in [1;\dvar] \text{ and } (\Param, \beta) \in \Paramset.
        \end{cases}
\end{align}
The following is our second main result. 
\begin{thm}\label{thrm:opt:extract}
Let \(\eta: \Rbb^{\dimcon \times N} \times \Rbb^N \rightarrow \Rbb\) be as defined above and suppose that \(c(\cdot)\) is near-monotone. We have the following assertions: (a) fix \(\param \in \fsblset\) and \(\eps>0\); the mapping \(\lcrc{0}{\horizon}^{\dvar} \ni \tseq \mapsto \gfunc(\tseq;\param,\eps)\) is continuous and there exists an optimizer \(\tseq\as(\param,\eps)\) which solves an appropriately modified version of the global optimization problem \eqref{eq:sup_sip}. (b) for \(\param \in \fsblset\) and \(\eps>0\) we have \(\valf(\param,\eps) = \gfunc(\tseq\as(\param,\eps);\param,\eps)\). (c) for a fixed \(\param\in \fsblset\) and \(\eps>0\), let \(\valf(\param,\eps)\) be the value of the CSIP \eqref{e:r_SIP} with the regularization (i.e., with the objective \(\objective^{\eps}\)). Then, the sequence of (unique) optimizers \(\bigl(\Param\as(\param,\eps),\beta\as(\param,\eps)\bigr)\) defined by
\begin{align*}
(\Param\as(\param,\eps),\beta\as(\param,\eps)) \Let  \argmin_{(\Param,\beta)} \quad &  \objective^{\eps}\\
        \sbjto \quad & \begin{cases}\text{the constraints in \eqref{e:r_SIP}}\\ 
        \text{hold for }\tseq^{\ast}(\param,\eps),
        \end{cases}
\end{align*}
converges to an optimizer of \eqref{e:r_SIP} as \(\eps \downarrow 0\).
\end{thm}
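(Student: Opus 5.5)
Parts (a) and (b) will follow the proof of Theorem \ref{thm:sampling_thm} almost verbatim, with \(\objectivedisc\) replaced by \(\objective^{\eps}\). The only change is in the objective, and \(\objective^{\eps}\) is still continuous and convex in \((\Param,\beta)\): it is the sum of the convex continuous map from step \ref{lemm:feas:6} of Theorem \ref{lemm:feas:conv:comp} and \(\eps\eta\), with \(\eta\) continuous and strictly convex. The feasible-set mapping \(\tseq\mapsto\) (constraints of \eqref{eq:g_func_regu}) coincides with that of \eqref{eq:g_func}. Its Painlev\'e--Kuratowski (hence Pompeiu--Hausdorff, by boundedness) continuity under Assumption \ref{assum:slater_cond} is therefore already established in \ref{thrm:point:a}--\ref{thrm:point:e}. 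Invoking \cite[Theorem 3B.5]{ref:DonRoc-14} again gives continuity of \(\gfunc(\cdot;\param,\eps)\), and Weierstrass's theorem on \(\lcrc{0}{\horizon}^{\dvar}\) gives a maximizer \(\tseq\as(\param,\eps)\) of \(\sup_{\tseq}\gfunc(\tseq;\param,\eps)\), which is the ``appropriately modified'' version of \eqref{eq:sup_sip}. For (b), the hypotheses \ref{thrm:csip:cond:1}--\ref{thrm:csip:cond:5} remain valid with the new convex continuous objective, so \cite[Theorem 1]{ref:DasAraCheCha-22} yields \(\valf(\param,\eps)=\gfunc(\tseq\as(\param,\eps);\param,\eps)\).

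For (c), I will first prove that the regularized relaxed problem at \(\tseq\as(\param,\eps)\) has a unique minimizer which is feasible for, and optimal in, the regularized CSIP. Existence comes from the relaxed feasible set: it contains \(\feasSip\), and it is compact because the boundedness argument in Theorem \ref{lemm:feas:conv:comp} uses only the finitely many constraints in \(\Paramset\), not the path constraints. Uniqueness comes from strict convexity of \(\objective^{\eps}\). Let \((\Param^\dagger,\beta^\dagger)\) be the unique optimizer of the regularized CSIP. Since the relaxed feasible set contains \(\feasSip\), \((\Param^\dagger,\beta^\dagger)\) is feasible for the relaxed problem, and by (b) its cost \(\valf(\param,\eps)\) equals the relaxed optimal value. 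So \((\Param^\dagger,\beta^\dagger)\) attains the relaxed minimum, and uniqueness forces \((\Param\as(\param,\eps),\beta\as(\param,\eps))=(\Param^\dagger,\beta^\dagger)\in\feasSip\). This is the step I expect to be the main obstacle: it requires equality of values \emph{and} uniqueness together, and it relies on \(\argmin\) being taken over a set on which strict convexity of \(\eta\) applies. If needed, I would use near-monotonicity of \(c\) (with \(\eta\) coercive) to guarantee that the infimum is attained even before compactness is exploited.

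The limit \(\eps\downarrow0\) is then the standard Tikhonov argument on the fixed compact convex set \(\feasSip\). Let \(z_\eps\Let(\Param\as(\param,\eps),\beta\as(\param,\eps))\in\feasSip\), let \(\objectivedisc\) denote the unregularized objective, and let \(z^0\) be any optimizer of \eqref{e:r_SIP}, which exists by Theorem \ref{lemm:feas:conv:comp}. Optimality of \(z_\eps\) gives \(\objectivedisc(z_\eps)+\eps\eta(z_\eps)\le\objectivedisc(z^0)+\eps\eta(z^0)\). Combining this with \(\objectivedisc(z^0)\le\objectivedisc(z_\eps)\) yields \(\eta(z_\eps)\le\eta(z^0)\) and \(0\le\objectivedisc(z_\eps)-\valf(\param)\le\eps\,\eta(z^0)\to0\). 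By compactness every sequence \(\eps_n\downarrow0\) has a convergent subsequence \(z_{\eps_n}\to\bar z\in\feasSip\). By continuity, \(\bar z\) is an optimizer of \eqref{e:r_SIP} and satisfies \(\eta(\bar z)\le\eta(z^0)\) for every optimizer \(z^0\). The optimal set is convex and \(\eta\) is strictly convex, so \(\bar z\) is the unique \(\eta\)-minimal optimizer. All subsequential limits therefore coincide, and the whole family \(z_\eps\) converges to that optimizer of \eqref{e:r_SIP}.
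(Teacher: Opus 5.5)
Your proof is correct. For (a) and (b) it matches the paper, which only remarks that the feasible-set map is unchanged and \(\objective^{\eps}\) stays convex and continuous, so the proof of Theorem \ref{thm:sampling_thm} carries over.

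For (c) you take a genuinely different route. The paper gives no argument of its own: it notes that \(\objectivedisc\) is convex, that \(\eta\) is strictly convex, and that \(\objectivedisc\) is near-monotone because \(c\) is, and then invokes \cite[Proposition 3.3]{ref:PPDC-23}. You instead argue directly in two steps.
\begin{itemize}
\item \emph{Step 1.} Value equality from (b), combined with uniqueness from strict convexity of \(\objective^{\eps}\), forces the relaxed minimizer at \(\tseq\as(\param,\eps)\) to equal the unique minimizer of the regularized CSIP. Hence it lies in \(\feasSip\) for every \(\eps>0\).
\item \emph{Step 2.} A standard Tikhonov argument on the compact convex set \(\feasSip\) gives convergence as \(\eps\downarrow 0\).
\end{itemize}

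What your route buys:
\begin{itemize}
\item It is self-contained.
\item It makes explicit why regularization is what enables optimizer extraction: without uniqueness, a relaxed minimizer need not satisfy the path constraint for all \(t\).
\item It identifies the limit as the \(\eta\)-minimal optimizer of \eqref{e:r_SIP}, which is sharper than ``an optimizer''.
\item It never uses near-monotonicity of \(c\). Compactness of the relaxed feasible set, whose boundedness argument in Theorem \ref{lemm:feas:conv:comp} uses only the constraints in \(\Paramset\), and compactness of \(\feasSip\) do that job. So your aside about coercivity is unnecessary.
\end{itemize}
This matters because the paper's claim that \(\objectivedisc\) is near-monotone in \((\Param,\beta)\) is delicate. \(\Param\) enters \(\objectivedisc\) only through the affine term \(\fcost(\st(\horizon))\), which is even identically zero in the numerical examples.

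What the paper's route buys is brevity, and placement within a general regularization result for CSIPs. That result presumably relies on near-monotonicity rather than compactness to control minimizing sequences.
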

\begin{pf}
As the feasible set remains unchanged and the addition of the term \((\Param,\beta) \mapsto \eta(\Param,\beta)\) preserves the convexity of the objective in \((\Param,\beta)\), parts (a) and (b) follow analogously to the proof of Theorem \ref{thm:sampling_thm}. We provide a brief proof for part (c).\footnote{The existence of solutions and properties established in Theorem \ref{lemm:feas:conv:comp} extend similarly to the feasible sets of the regularized CSIP with objective \(\objective^{\eps}\) and the convex program \eqref{eq:g_func_regu}.} Note that \((\alpha,\beta) \mapsto \objectivedisc(\param, \Param\Reg(\cdot), \inprod{\beta}{\Reg(\cdot)})\) is convex and \(\eta(\cdot,\cdot)\) is strictly convex. Moreover, due to \(c(\cdot)\) being near-monotone, \(\objectivedisc(\cdot)\) is near-monotone. The conditions of \cite[Proposition 3.3]{ref:PPDC-23} are met, allowing us to directly apply its result to establish Theorem \ref{thrm:opt:extract}. The proof is complete.  \(\qedwhite\)
\end{pf}

\subsection{Architecture to numerically solve \eqref{e:r_SIP}}\label{subsec:algo}
We present an algorithmic architecture --- \(\glbopt\) --- to solve the global optimization problem \eqref{eq:sup_sip}. The framework \(\glbopt\) provides a unified approach for synthesizing constrained control actions by selecting a suitable global optimization routine to solve the maximization problem \eqref{eq:sup_sip}.\footnote{In our architecture, the main computational burden arises from this outer global maximization step. As a result, pursuing the strongest ``exactness'' guarantee comes at a tangible runtime cost relative to approaches that rely on local/gradient-based oracles. To illustrate this trade-off, in \S\ref{sec:num_exp} we also report results with the gradient-based module \(\textsf{gradOL}\) \cite{ref:GradOL}, which replaces the global maximization by a gradient-driven search and can therefore be substantially faster, albeit without the same global certificate. For this reason, the full \(\glbopt\) architecture is currently best suited to offline trajectory generation and certification, rather than online deployment.} The choice of the routine depends on the properties of the function \(\gfunc(\cdot;\param,\eps)\).

\begin{itemize}[leftmargin=*,label=\(\circ\)]
    \item If \(\gfunc(\cdot;\param,\eps)\) is continuous, as shown in Theorem \ref{thm:sampling_thm}, convergence is ensured by using the simulated annealing oracle or differential evolution oracle within the \(\glbopt\) architecture, provided an appropriate cooling schedule and mild conditions on the transition kernel are met, as established in \cite[Theorem 2]{ref:DasAraCheCha-22}, \cite[Theorem 1]{ref:CJPB-92}, and \cite{ref:storn1997differential}.

    \item If \(\gfunc(\cdot;\param,\eps)\) is Lipschitz continuous, global optimization oracles like \texttt{SequOOL} (Sequential Optimistic Optimization with Levels) or \texttt{LIPO} (Lipschitz Optimization based on Local Partitions) can be utilized. \texttt{SequOOL} leverages a deterministic hierarchical approach \cite{ref:BarGabVal-19}, achieving an exponential regret bound \cite{ref:GriValMun-15}; \texttt{LIPO} uses stochastic averaging \cite{ref:MalVay-17}, delivering robust PAC-style regret bounds. Both methods, when paired with randomized restarts, lead to rapid convergence to global optima in practice.

    \item Several criteria in \(\glbopt\) can be picked depending upon the type of applications. \(\textsf{S1}\) is the stopping criterion for the outer loop, determining when to return an optimizer estimate. It can be defined, for example, as the norm of the difference between successive estimates:
\(\textsf{S1}(j) \Let \norm{(\ol{\Param}^j_{\param,\eps},\ol{\beta}^j_{\param,\eps})-(\ol{\Param}^{j-1}_{\param,\eps},\ol{\beta}^{j-1}_{\param,\eps})}.\) \(\textsf{Regu}(\cdot,\cdot)\) is the regularization update function, producing a decreasing sequence of regularizers with every update in iterations, e.g., \(\textsf{Regu}(\eps,j) \Let \frac{\eps}{2}\). The inner loop uses a global optimization routine with a stopping criterion \(\textsf{S2}\), such as a maximum number of iterations, a temperature threshold, or a bound on successive value differences (e.g., in simulated annealing). \(\textsf{S3}\) is the Improvement/Update Criterion of the global optimization routine, determining whether to accept a candidate solution or not. For example, if one elects to employ the simulated annealing global optimization routine, a candidate is accepted if it improves the current best, or otherwise with probability \(\exp{\left(\frac{\gfunc^{j,k}-\gfunc^{j}_{\max}}{T_m}\right)}\), where \(T_m\) is the temperature. We redirect the readers to \S \ref{subsec:comparision} for some numerical statistics related to an energy optimal landing problem \eqref{eq:energy_des_active}, where we include three different global optimization routines.
\end{itemize}

{
\renewcommand{\algorithmcfname}{\(\glbopt\)}
\renewcommand{\thealgocf}{}
\begin{algorithm2e}[!ht]
	\SetAlgoLined
  	\DontPrintSemicolon
\SetKwInOut{ini}{Initialize}
    \SetKwInOut{giv}{Data}
    \giv{Stopping criterion \(\textsf{S1}(\cdot)\) for extracting the optimizer and its threshold \(\tau_1\),  Stopping criterion \(\textsf{S2}(\cdot)\) for global optimization oracle and its threshold \(\tau_2\), Selection criterion for global optimization: $\textsf{S3}(\cdot)$, regularization parameter \(\eps\) initialization, regularizer function $\regul(\cdot,\cdot)$;}
    \ini{initialize constraint indices -- \(\tseq_0 \in \lcrc{0}{\horizon}^{\dvar}\), \(j=0\);\\
    select \(\gfunc_{\max} = \gfunc(\tseq_0;\param,\eps)\);\\ 
    pick the initial solution \(\bigl(\ol{\Param}_{\param,\eps},\ol{\beta}_{\param,\eps}\bigr)\)} \hspace{17.5mm}by solving:\\
     \(\argmin_{(\Param, \beta)}\left\{ \objective^{\eps}  \,\middle\vert  \begin{array}{@{}l@{}}
      \text{ constraints of } \eqref{e:r_SIP} \text{ holds at } \tseq_0 
      \end{array}
        \right\}\)
\While{$\textsf{S1}(j)\leqslant \tau_1$}{
\(\eps \gets \regul(\eps,j) \)\; 

$k\gets 0$ \;

\While{$\textsf{S2}(k)\leqslant \tau_2$}{

\emph{Evaluate:} \(\gfunc^{j,k} \Let \gfunc(\tseq_0;\param,\eps)\) as defined in \eqref{eq:g_func_regu} \;      

\emph{Recover the solution} \(\ol{\Param}^{j,k}_{\param,\eps}, \ol{\beta}^{j,k}_{\param,\eps}\) by solving: 
 \(\argmin_{(\Param, \beta)}\left\{ \objective^{\eps}  \,\middle\vert  \begin{array}{@{}l@{}}
      \text{ constraints of } \eqref{e:r_SIP} \text{ holds } \\ \text{ at } \tseq_0 
      \end{array}
        \right\}\)

Update $\gfunc_{\max}, \ol{\Param}_{\param,\eps},\ol{\beta}_{\param,\eps}, \tseq_0 \gets $
\textsf{S3}$(\gfunc_{\max}, \ol{\Param}_{\param,\eps},\ol{\beta}_{\param,\eps},\tseq_0, \gfunc^{j,k}, \ol{\Param}^{j,k}_{\xz,\eps},\ol{\beta}^{j,k}_{\xz,\eps},k)$
Update $k \gets k+1$ \;
}
Update $j\gets j+1$\;
}
\textbf{Output:} \((\Param\as(\param,\eps),\beta\as(\param,\eps))\)
\caption{A general architecture to solve \eqref{e:r_SIP}}
\label{alg:sprob}
\end{algorithm2e}
}

\begin{rem}\label{rem:on:GPU:implementations}
The algorithmic architecture \(\glbopt\) is inherently modular. At a conceptual level, the method decomposes the original problem into two distinct layers: an inner convex minimization procedure and an outer global maximization routine. In particular, the architecture \(\glbopt\) is solver-agnostic, in the sense that any suitable convex optimization algorithm and any suitable global optimization scheme can be incorporated without modification of the overarching framework. 
This structural separation yields two principal advantages. First, the performance of \(\glbopt\) automatically inherits advances in either class of solvers. Improvements in convex optimization or the development of faster global optimization solvers can be integrated directly, without redesigning the theoretical or algorithmic foundations. The framework is therefore forward-compatible and insulated from implementation-specific obsolescence. Second, the architecture is compatible with modern computational paradigms. If either the convex or the global optimization layer supports GPU acceleration, parallel execution, or distributed implementations, these computational gains propagate transparently to \(\glbopt\). GPU benefits depend heavily on the solver stack; most off-the-shelf conic/QP solvers are CPU-optimized, but the field has seen many recent developments, such as \cite{ref:GPU1,ref:GPU2,ref:GPU3}, which potentially can be utilized in our framework. These interesting computational directions will be pursued separately and will be reported in our subsequent investigations. 
\end{rem}

%% file: Numerics.tex
\section{Numerical experiments}
\label{sec:num_exp}
We explore the intricate three-DoF planetary landing problem, where the lander must precisely reach a designated touchdown point while optimizing fuel efficiency \cite[Section 7, Example 1]{ref:MWHBA-14b}. The spacecraft is under-actuated, with the thrust vector serving as the sole control input. The dynamics of the system evolve under a gravitational field, subject to constraints on the allowable landing region. The motion of the lander is governed by the following equations:
\begin{equation}\label{eq:powdes_sys}
    \begin{pmatrix}
        \dot{r}(t) \\
        \dot{v}(t)
    \end{pmatrix}
    =
    \begin{pmatrix}
        v(t) \\
        -g + u(t)
    \end{pmatrix}.
\end{equation}
Here \(t \mapsto r(t) \in \Rbb^3\), \(t \mapsto v(t) \in \Rbb^3\), and \(t \mapsto u(t) \in \Rbb^3\) represent the position, velocity, and control trajectories, respectively. The system is subject to a constant gravitational acceleration \( g = [0, 3.71, 0]^{\top} \). We consider two variants of planetary landing: \emph{energy-optimal landing} and \emph{fuel-optimal landing}, and in each case our task is to guide the vehicle from its initial state to the designated landing site while ensuring a zero final velocity. The initial and terminal conditions are specified as
\begin{equation}
\label{eq:boun_cond}
\begin{cases}
 r(0) = [400, 400, 300]^{\top} \text{ m}, \,\, v(0) = -[10, 10, 75]^{\top} \text{ m/s} \\
 r(T) = [0, 0, 0]^{\top} \text{ m}, \,\, v(T) = [0, 0, 0]^{\top} \text{ m/s}.
    \end{cases}
\end{equation}
For safety, the vehicle's descent must follow a 45-degree altitude-to-range trajectory, enforced by the constraint:
\begin{equation}
    \label{eq:planar_constraint}
    r_1(t) - r_2(t) = 0.
\end{equation}
\noindent Since the path constraints are the same as in \cite[Example 1, Section 7]{ref:MWHBA-14b}, the condition in \ref{COCP_criteria} is satisfied. 
A key distinction between these two problems lies in the objective function --- while energy-optimal landing typically involves a quadratic cost, fuel-optimal landing is defined by an \(\lpL[1]\)-norm cost, introducing additional intricacies in the optimization process. 

The numerical simulations presented here are conducted using Julia v1.10.0~\cite{ref:JBAESKBS-17}, where the internal minimization problem was solved using the \texttt{IPOPT} solver~\cite{ref:AWLTB-06} and the \texttt{JuMP} package~\cite{ref:IDJHML-2017}\footnote{The computations are performed on a PC running Ubuntu 20.04.6 LTS, equipped with 16 GB RAM and an 11th Gen Intel® Core™ i7-11800H processor @ 2.30GHz.}. For the sampling-based maximization of \(\mathcal{G}\) in \(\glbopt\), we use the \texttt{SAMIN} solver from the \texttt{Optim.jl} package~\cite{ref:PMAR-18}, which implements simulated annealing, and the \texttt{BlackBoxOptim} solver from the \texttt{Optimization.jl} package~\cite{ref:VKDCR-23}, which implements differential evolution.
Reproducible codes may be found in the GitHub repository: \url{https://github.com/v-upadhyay/pow_des_csip}.
\subsection{Energy optimal landing, case I}\label{subsec:energy_opt}
Here, the primary objective is to execute a successful landing while minimizing energy consumption, which leads to the following OCP
\begin{equation}
	\label{eq:energy_des_certain}
\begin{aligned}
&\hspace{-2mm} \inf_{\cont(\cdot)}	&& \hspace{-2mm}\int_{\tinit}^{\horizon} \norm{u(t)}^2 \odif{t} \\
&  \hspace{-2mm}\sbjto		&&  \hspace{-4mm}\begin{cases}
\dot{r}(t) = v(t),\,\, \dot{v}(t) = -g + u(t),\\
\text{boundary condition } \eqref{eq:boun_cond},\\
\text{and safety constraints } \eqref{eq:planar_constraint} \text{ for all }t\in \lcrc{0}{\horizon}, \\  
 \norm{u(t)} \in [2, 10] \text{ for all } t \in \lcrc{0}{\horizon}.\\
\end{cases}
\end{aligned}
\end{equation}
In the optimal control problem \eqref{eq:energy_des_certain}, the function \(\kappa: \Rbb^3 \lra [0, +\infty[\) is defined as \(x \mapsto \kappa(x) \Let \norm{x}\), while the function \(c: [0, +\infty[ \lra [0, +\infty[\) is defined as \(x \mapsto c(x) \Let x^2\).\footnote{Notice that the control constraint in \eqref{eq:energy_des_certain} encodes an annular admissible set.} Moreover, the mapping \(r_F: \Rbb^{\dimst} \lra \Rbb^{\dimst}\) corresponds to the identity function, and \(c_F: \Rbb^{\dimst} \lra \Rbb\) is given by \(c_F(x) \Let 0\). All these functions satisfy the problem data \ref{OCPdata1}-\ref{OCPdata3}. In OCP \eqref{eq:energy_des_certain} after employing the convexification we parametrized the control \( \cont(\cdot)\) and slack \( \slack(\cdot) \) trajectories using \eqref{e:cont_param} and \eqref{e:slack_param} keeping \(N = 100\) and \(\horizon = 22s\). As a result, the required number of time instants for maximizing \(\gfunc(\cdot;\param, 0)\) is \(\dvar = 100 \times 3 + 100 = 400,\) determined by \ref{dvar}.

The energy-optimal trajectory and the control norm obtained by applying our method are shown in Fig. \ref{fig:combined_results}: observe that the trajectory follows the specified boundary and the safety constraints, and the control norm remains within the prespecified bounds. The evolution of the error between \(s(\cdot)\) and the norm of the control trajectory is shown in Fig. \ref{fig:error_traj} (in blue). The error remains of the order of \(10^{-8}\), indicating that the solution closely satisfies the condition \(\kappa(\cont(t)) = \slack(t)\) for all \(t \in \lcrc{0}{22}\) up to standard numerical errors. 
\subsection{Fuel optimal landing}\label{subsec:fuelopt_disturbed}
Our objective here is to execute a successful landing while minimizing fuel consumption, which leads to the OCP
\begin{equation}\label{eq:powdes_certain}
\inf_{\cont(\cdot)}	\aset[\bigg]{\int_{\tinit}^{\horizon} \norm{u(t)} \odif{t} \suchthat \text{constraints in \eqref{eq:energy_des_certain} hold.}}
\end{equation}
In \eqref{eq:powdes_certain} the function \(\kappa: \Rbb^3 \lra [0, +\infty[\) is defined as \(\kappa(\cdot) \Let \norm{\cdot}\), while \(c: [0, +\infty[ \lra [0, +\infty[\) and \(r_F: \Rbb^{\dimst} \lra \Rbb^{\dimst}\) are identity functions, and \(c_F: \Rbb^{\dimst} \lra \Rbb\) is \(c_F(\cdot) \Let 0\). All these functions satisfy the problem data \ref{OCPdata1}-\ref{OCPdata3}. For \eqref{eq:powdes_certain}, we kept the same \(N = 100\) and \(\horizon = 22s\) and consequently \(\dvar = 100 \times 3 + 100 = 400\).

\begin{figure*}[!t]
    \centering
    \subfloat[]{\includegraphics[width=0.48\textwidth]{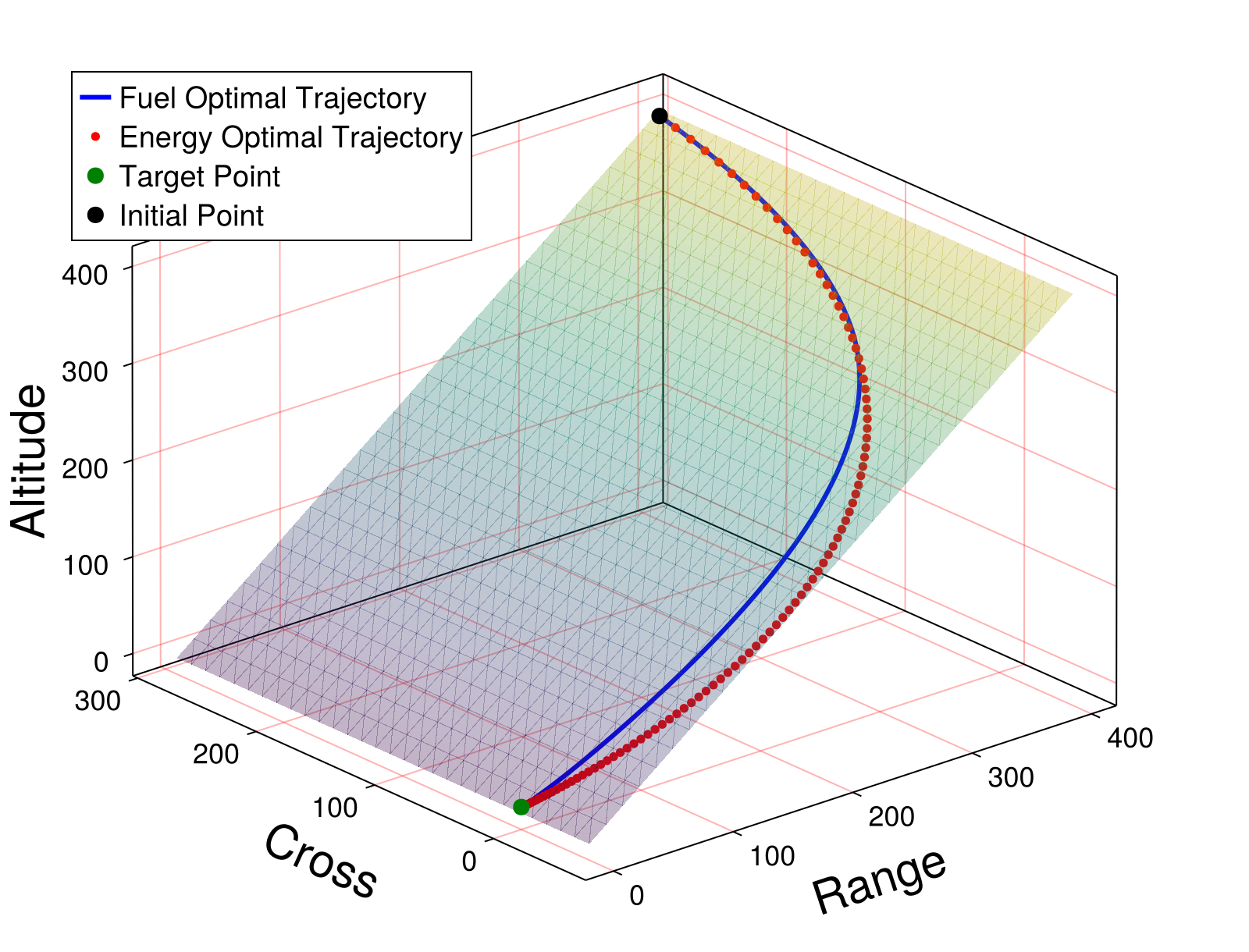}} \hfill
    \subfloat[]{\includegraphics[width=0.48\textwidth]{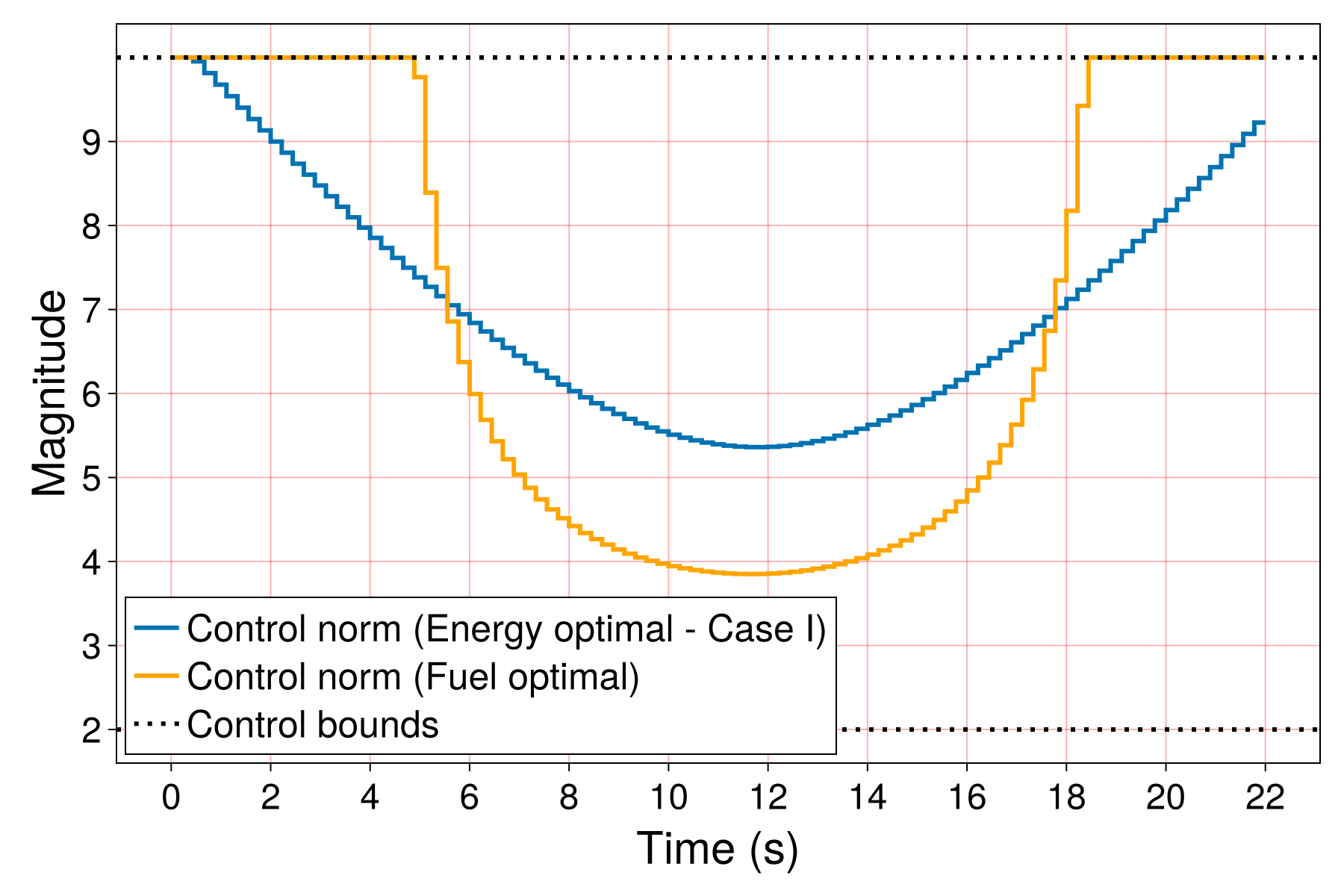}} \hfill
    \caption{(a) Fuel- and energy-optimal evolution trajectories for OCPs \eqref{eq:energy_des_certain} and \eqref{eq:powdes_certain} respectively on the subspace generated by constraint \eqref{eq:planar_constraint}. (b) Demonstration that the control norm remains within prescribed bounds and matches the slack variable.}  
    \label{fig:combined_results}
\end{figure*}

\begin{figure*}[!t]
    \centering
    \subfloat[]{\includegraphics[width=0.48\textwidth]{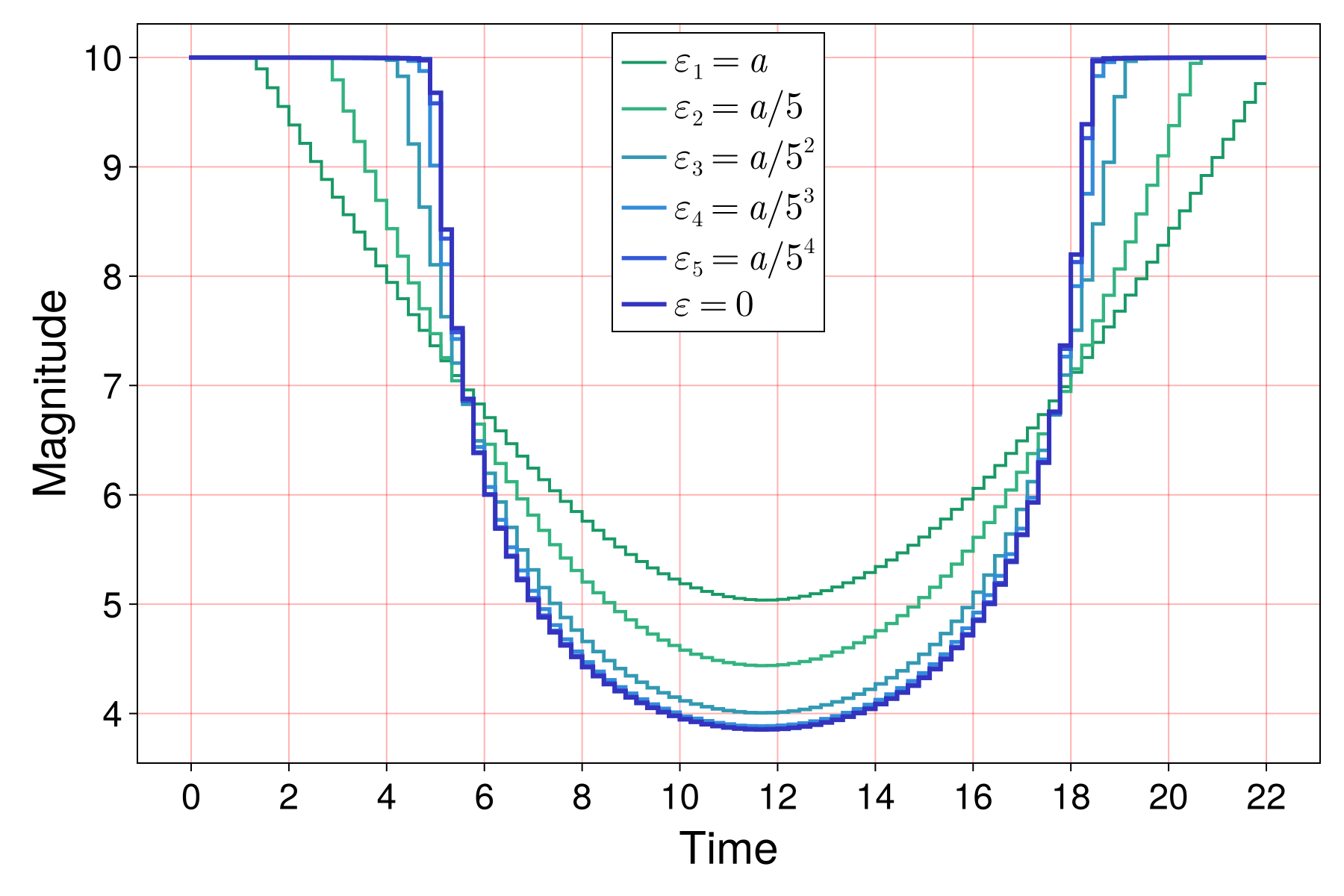} \label{fig:reg_control}} \hfill
    \subfloat[]{\includegraphics[width=0.48\textwidth]{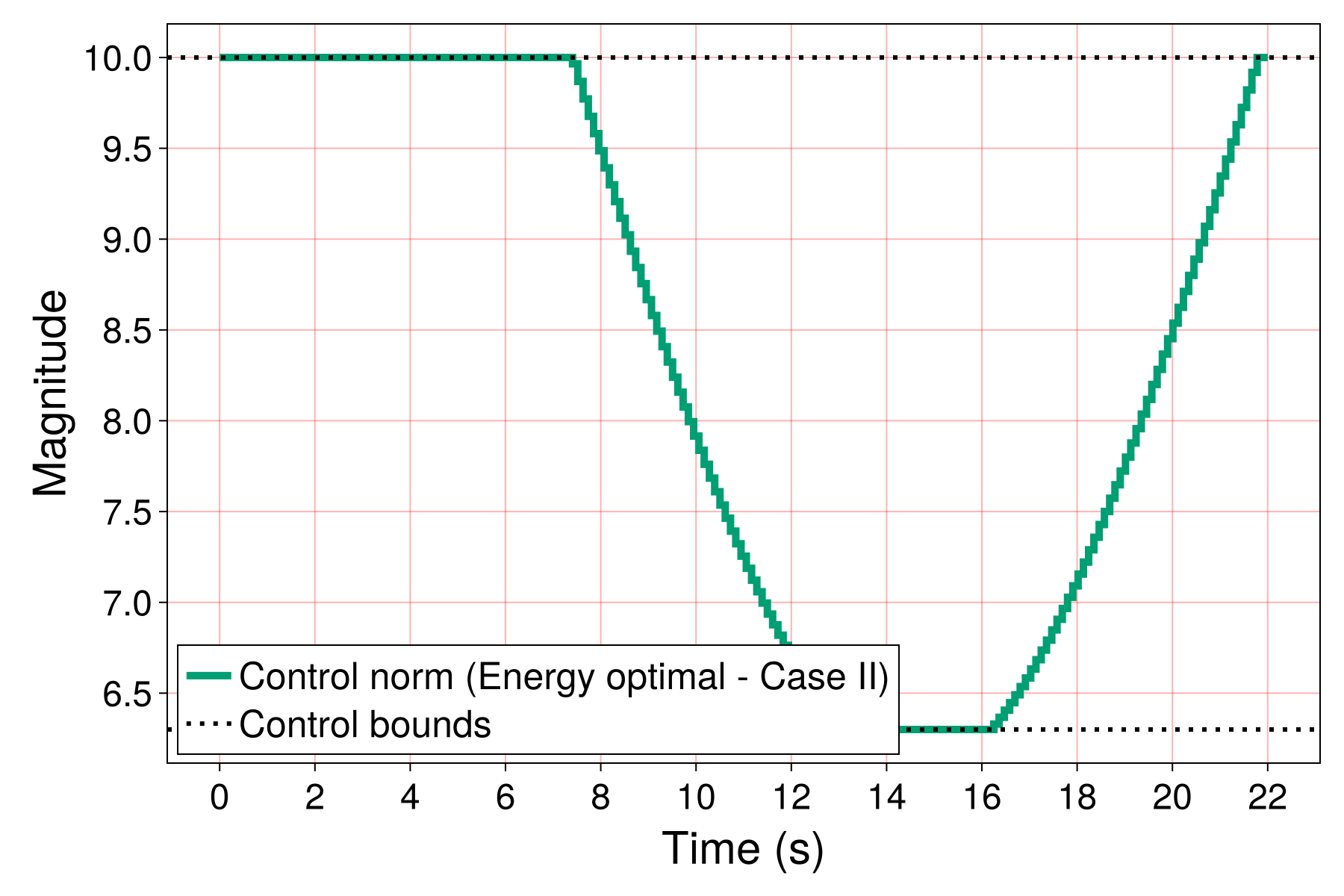} \label{fig:active_control}} \hfill
    \caption{(a) This figure illustrates the convergence of the optimal control trajectory of OCP \eqref{eq:powdes_certain} as given in Theorem \ref{thrm:opt:extract}; the regularized solutions converge to a true solution as \(\eps \downarrow 0\). Specifically, we set the regularization parameter as \(\eps_k \Let a/5^{k-1}\), with \(a = 0.1\). (b) Constraint satisfaction of the control trajectory obtained as the solution to OCP~\eqref{eq:energy_des_active}, where the nonconvex constraint becomes active over a certain time interval.}  
    \label{fig:combined_results_2}
\end{figure*}
The fuel-optimal trajectories and the corresponding control norms are depicted in Fig. \ref{fig:combined_results}, demonstrating constraint satisfaction and efficient landing manoeuvre. Fig. \ref{fig:reg_control} depicts the convergence and extraction of accurate optimizers for a sequence of problems where the regularization parameter was selected as \(\eps_k \Let \tfrac{a}{5^{k-1}}\) with \(a=0.1\). Note that the optimal cost obtained by solving problem \eqref{eq:powdes_certain} using \(\glbopt\) is \(152.228\), which is, as guaranteed, higher than the optimal cost value \(152.218\) obtained by solving it at uniformly distributed grid points, as done in \cite{ref:MWHBA-14b}. Error trajectory between \(s(\cdot)\) and the norm of the control is shown in Fig. \ref{fig:error_traj} (in yellow), which remains of the order of \(10^{-8}\).

\subsection{Energy optimal landing, case II (active nonconvex constraints)}\label{subsec:energy_active_control}
We consider the same energy-optimal landing problem as in \eqref{eq:energy_des_certain}, but with different boundary conditions and control bounds.  
We consider the following OCP
\begin{equation}
	\label{eq:energy_des_active}
\begin{aligned}
&\hspace{-2mm} \inf_{\cont(\cdot)}	&& \hspace{-2mm}\int_{\tinit}^{\horizon} \norm{u(t)}^2 \odif{t} \\
&  \hspace{-2mm}\sbjto		&&  \hspace{-4mm}\begin{cases}
\dot{r}(t) = v(t),\,\, \dot{v}(t) = -g + u(t),\\
 r(0) = [400, 400, 700]^{\top} \text{ m}, \,\, v(0) = -[10, 10, 105]^{\top} \text{ m/s}, \\
 r(T) = [0, 0, 0]^{\top} \text{ m}, \,\, v(T) = [0, 0, 0]^{\top} \text{ m/s}\\
\text{and safety constraints } \eqref{eq:planar_constraint} \text{ for all }t\in \lcrc{0}{\horizon}, \\  
 \norm{u(t)} \in [6.3, 10] \text{ for all } t \in \lcrc{0}{\horizon}.\\
\end{cases}
\end{aligned}
\end{equation}
In OCP \eqref{eq:energy_des_active}, after convexification the control \( \cont(\cdot) \) and slack \( \slack(\cdot) \) trajectories are parametrized as in \eqref{e:cont_param} and \eqref{e:slack_param}, with \(N = 200\) and \(\horizon = 22s\) and thus, the number of time instants for maximizing \(\mathcal{G}(\cdot;\param, 0)\) in  \(\glbopt\) is \(\dvar = 200 \times 3 + 200 = 800\).
The corresponding numerical results are presented in Fig. \ref{fig:active_control}, where it can be observed that the control norm magnitude hits the maximum limit. The green trajectory in Fig. \ref{fig:error_traj} represents the error between \(\slack(\cdot)\) and the norm of the control trajectory, which remains of the order of \(10^{-10}\).
\begin{figure}[h]
    \centering
    \includegraphics[width=\linewidth]{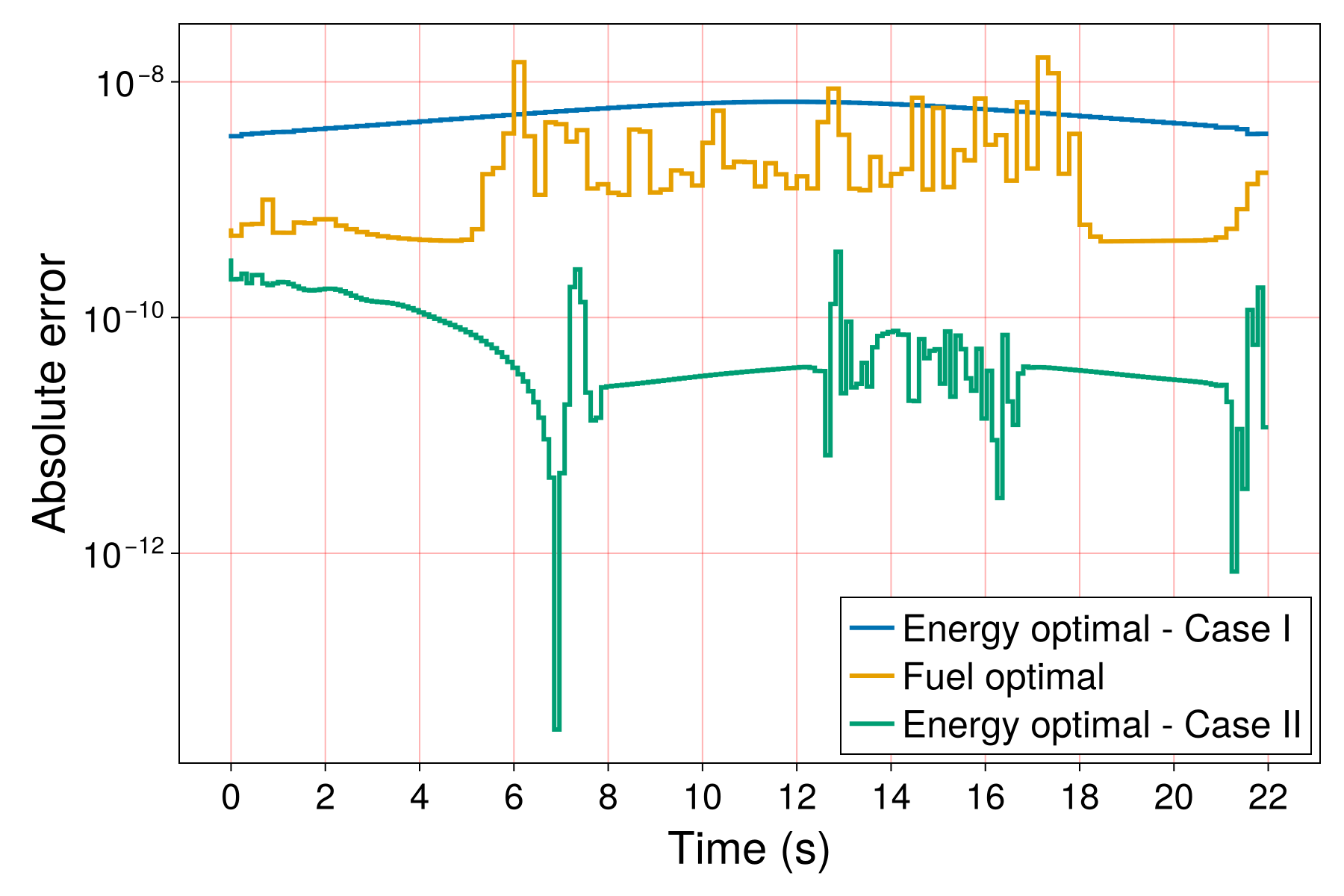}
    \caption{This plot shows the error (on log scale) between the slack trajectory and the norm of the control trajectory over time, indicating that the solution closely satisfies the necessary condition \( \kappa(\cont(t)) = \slack(t) \) for all \(t \in \lcrc{0}{22}\) as stated in \cite[Lemma 10]{ref:MWHBA-14b} up to (standard) numerical errors.}
    \label{fig:error_traj}
\end{figure}

\subsection{Fuel-optimal landing with disturbance}
\label{subsec:infeas_fuel_opt}
We adopt a similar formulation as presented in \cite[Section 7, Example 1]{ref:MWHBA-14b}. The system dynamics are given by:
\begin{equation}
\label{eq:powdes_sys_dist}
    \begin{pmatrix}
        \dot{r}(t) \\
        \dot{v}(t)
    \end{pmatrix}
    =
    \begin{pmatrix}
        v(t) \\
        -g + u(t) + w(t)
    \end{pmatrix},
\end{equation}
where \(w(t) \in \mathbb{R}^3\) denotes a known and modelled disturbance, which is given by
\begin{equation}
\label{eq:disturbance}
    w(t) = [\sin(t),\ 0,\ \cos(t)]^\top \quad \text{for all } t \in [0, \horizon].
\end{equation}
We consider the following OCP
\begin{equation}
	\label{eq:disturbed_ocp}
\begin{aligned}
&\hspace{-2mm} \inf_{\cont(\cdot)}	&& \hspace{-2mm}\int_{\tinit}^{\horizon} \norm{u(t)} \odif{t} \\
&  \hspace{-2mm}\sbjto		&&  \hspace{-4mm}\begin{cases}
\text{dynamics }\eqref{eq:disturbance},\,
\text{boundary condition } \eqref{eq:boun_cond},\\
\text{safety constraints } \eqref{eq:planar_constraint},\,\text{and }
 \norm{u(t)} \in [2, 10] \\ \text{ for all } t \in \lcrc{0}{\horizon}.\\
\end{cases}
\end{aligned}
\end{equation}
In OCP~\eqref{eq:disturbed_ocp}, after convexification the control \( \cont(\cdot) \) and slack \( \slack(\cdot) \) trajectories are discretized according to the parametrizations in \eqref{e:cont_param} and \eqref{e:slack_param}, with \(N = 100\) intervals over a horizon of \(\horizon = 21\,\text{s}\).

If the constraints are enforced only at the 100 uniformly distributed grid points --- the optimization problem appears feasible, as verified by solving the OCP \eqref{eq:disturbed_ocp} using \texttt{IPOPT} solver \cite{ref:AWLTB-06} and the \texttt{JuMP} package \cite{ref:IDJHML-2017}.\footnote{The number of time instants required for maximizing \(\gfunc(\cdot;\param, \eps)\) is \(\dvar = 100 \times 3 + 100 = 400\), as determined by \ref{dvar}. However, for the purpose of comparison with \cite[Section 7, Example 1]{ref:MWHBA-14a}, we set \(\dvar = N = 100\).} During the course of maximization of \(\gfunc(\cdot;\param, 0)\), we noted that, for
\begin{quote}
  \(\tseq' \Let\)  \texttt{\{0.21, 0.42, 0.63, 0.84, 1.05, 1.26, 1.47, 1.68, 1.89, 2.1, 2.31, 2.52, 2.73, 2.94, 3.15, 3.36, 3.57, 3.78, 3.99, 4.2, 4.41, 4.62, 4.83, 5.04, 5.25, 5.46, 5.67, 5.88, 6.09, 6.3, 6.51, 6.72, 6.93, 7.14, 7.35, 7.56, 7.77, 7.98, 8.19, 8.4, 8.61, 8.82, 9.03, 9.24, 9.45, 9.66, 9.87, 10.08, 10.29, 10.5, 10.71, 10.92, 11.13, 11.34, 11.55, 11.76, 11.97, 12.18, 12.39, 12.6, 12.81, 13.02, 13.23, 13.44, 13.65, 13.86, 14.07, 14.28, 14.49, 14.7, 14.91, 15.12, 15.33, 15.54, 15.75, 15.96, 16.17, 16.38, 16.59, 16.8, 17.01, 17.22, 17.3422225093121, 17.43, 17.64, 17.85, 18.06, 18.27, 18.48, 18.69, 18.9, 19.11, 19.32, 19.53, 19.74, 19.95, 20.16, 20.37, 20.58, 20.79\}},  
\end{quote} 
the set
\begin{align}
  \bigcap_{t \in \tseq'} \left\{\hspace{-1mm}(\Param, \beta) \middle\vert \;
\begin{array}{@{}l@{}}
       \text{boundary condition} \eqref{eq:boun_cond}, \\ 
    \inprod{\beta}{e_k} \in [2, 10],\, \kappa\left(\Param e_k \right) \leq \inprod{\beta}{e_k} \\ \text{ for all }k \in [1;100] \text{ and } r_1(t) = r_2(t)
        \end{array}
        \right\} = \emptyset. \nn
\end{align}
This shows that \(\glbopt\) can detect empty feasible sets, a feature encoded into Theorem \ref{thm:sampling_thm} and Theorem \ref{thrm:opt:extract}. Even without the verification of Slater's condition for \eqref{e:fOCP}, performing \eqref{eq:sup_sip} often reveals infeasibility over \(\dvar\) many constraints, and that immediately implies that the (uncountably constrained) problem \eqref{e:fOCP} must be infeasible. 

\subsection{Comparison}\label{subsec:comparision}
This section records several comparative results for existing methods applied to OCP \eqref{eq:energy_des_active}; this includes existing robust optimization techniques and direct trajectory optimization algorithms. These statistics are given in Table \ref{tab:solver_time_comparison_r3}.

\begin{table*}[h!]
\begin{tblr}{l c c c}
\hline[2pt]
Solver & CPU time (in secs) & Optimal value  & continuous-time constraint satisfaction\\
\hline[2pt]
\SetRow{azure9}
\(\dlcvx\): 200 grid points &  \(1.54\) & \(1612.052833\)  & \xmark \\
\SetRow{azure9}
\hspace{10.0mm} 1000 grid points &  \(36.63\) & \(1603.223254\)  & \xmark \\
\SetRow{azure9}
\hspace{10.0mm} 3000 grid points &  Solver crashed & Solver crashed  & \xmark \\
\hline
\SetRow{green9}
\(\glbopt\): Diff. evol. &  \(128.96\) & \(1601.065788\)  & \cmark \\
\SetRow{green9}
\hspace{11mm} Sim. anneal. &  \(86.62\) & \(1601.091874\)  & \cmark \\
\SetRow{green9}
\hspace{11mm} Sim. anneal. (w.s.) &  \(48.89\) & \(1601.065646\)  & \cmark \\
\SetRow{green9}
\hspace{11mm} \(\gradol\) &  \(7.84\) & \(1601.065663\)  & \cmark \\

 \hline
Scenario: 800 samples &  \(2.27\) & \(1601.065663\)  & \xmark \\
 \hline
\(\quito\): 200 grid points &  \(1.10\) & \(1612.052836\)  & \xmark \\
\hspace{10.8mm} 1000 grid points &  \(10.83\) & \(1603.223255\)  & \xmark \\
\hspace{10.8mm} 3000 grid points &  \(31.02\) & \(1601.770678\)  & \xmark \\
\hspace{10.8mm} 5000 grid points &  \(54.86\) & \(1601.480808\)  & \xmark \\
\hline[2pt]
\end{tblr}
\centering
\vspace{2.5mm}
\caption{Numerical statistics of several algorithms including \(\glbopt\) (with three modules --- differential evolution, simulated annealing, and \(\gradol\)), the scenario robust optimization techniques, \(\quito\) (a direct trajectory optimization solver), and \(\dlcvx\) (a lossless convexification method for discrete-time optimal control problems). For each solver, the reported ``CPU time (in secs)" corresponds to the average over 200 independent runs, while the reported ``Optimal value'' is the maximum objective value attained across these 200 runs. For all solvers, every minimization problem is solved using \texttt{IPOPT} in order to ensure uniformity across implementations. In the Sim. anneal. (w.s.) column of the table w.s. represents the warm start.}
\label{tab:solver_time_comparison_r3}
\end{table*}

We explain the statistics reported in Table \ref{tab:solver_time_comparison_r3}:
\begin{itemize}[leftmargin=*,label= \(\circ\)]

\item \(\dlcvx\) \cite{ref:Dlcvx-26}: \(\dlcvx\) extends the lossless convexification framework \cite{ref:MWHBA-14b} to discrete-time optimal control problems. Accordingly, a direct algorithmic comparison is not strictly appropriate, since the two approaches are developed under different problem formulations and theoretical aims. Nonetheless, we solved our numerical example using \(\dlcvx\) and report the corresponding statistics in Table~\ref{tab:solver_time_comparison_r3}. We employ an Euler discretization of the system dynamics to obtain the corresponding discrete-time model. We emphasize, as pointed out in \S4.4 of the revised manuscript, \(\dlcvx\) may not detect infeasibility. As expected, the resulting ``Optimal value” is substantially higher than that obtained with \(\glbopt\) and the scenario-based method.

\item \(\glbopt\) is our primary \emph{plug-and-play} algorithmic framework for solving the CSIP (19), which allows the flexible inclusion of any reasonable global optimization solver.
The current version consists of three modules
\begin{enumerate}[leftmargin=*]

\item Differential evolution: We utilized the differential evolution global optimization routine for the outer global maximization (\textsf{S3}) in our initial submission. 
\item Simulated annealing: In the revised version, we also tested the numerical example with simulated annealing as the global solver within the \(\glbopt\) module. From Table \ref{tab:solver_time_comparison_r3}, it can be seen that simulated annealing requires a computation time of 86.62 secs, compared to 128.96 secs for differential evolution. Moreover, when a warm start strategy is implemented, the computation time of Sim. anneal. variant of \(\glbopt\) reduced to 48.89 secs. In this implementation, the solution of the convex minimization problem obtained at the previous iterate of the global maximization step is used as the initial point for solving the subsequent convex minimization problem.
\item \(\gradol\): We employed the numerical gradient-based method \cite{ref:GradOL} in our \(\glbopt\) architecture as a maximization routine (\textsf{S3}).\footnote{\(\gradol\) is a gradient-based method for finding the smallest enclosing ball of a set (the Chebyshev center) and, more generally, for solving convex semi-infinite optimization problems.} Since it's a gradient-based method, its performance depends on the initialization; therefore, we executed it from 200 randomly sampled initializations.
\end{enumerate}

\item The scenario optimization technique \cite{MC-SG:18}: Scenario-based robust optimization techniques randomly sample a finite sequence of time instances and solve the robust optimization problems with constraints only on those time instances. Such a technique is rooted in randomization and provides probabilistic guarantees on the continuous-time constraint satisfaction, and does not provide any theoretical guarantee of satisfying \emph{all} the uncountably many constraints.

\item \(\quito\) \cite{ref:QuITOv2,ref:QuITO:SoftX}: We compared our results with direct optimization based methods. \(\quito\) is a direct transcription technique that employs the conventional discretize-the-optimize maxim to transcribe a given optimal control problem to a finite-dimensional optimization problem (nonlinear and nonconvex, in general) by discretizing the time horizon and enforcing constraints on the grid points. Due to the very nature of enforcement of constraints (in particular, safety constraints) at discrete points, there is no guarantee of constraint satisfaction for all time \(t \in \lcrc{0}{\horizon}\), which may cause inter-sample constraint violations. 
We numerically tested our example by applying \(\quito\) and observed that when the number of uniform grid points is less, the computation time is better, but the optimal value is inaccurate and often higher. With an increased number of grid points at the expense of computation time, the optimal value improves as observed in Table \ref{tab:solver_time_comparison_r3}. 
\end{itemize}

\textbf{In summary:} \(\glbopt\) delivers the strongest outcome --- exact recovery for the CSIP formulation together with \textbf{continuous-time constraint satisfaction} as per our main theoretical results (Theorem 8 and 11) and it can also \textbf{detect infeasibility} by identifying violating time instants, an example of such case is shown in \S4.4.

While with Diff. evol. and Sim. anneal., \(\glbopt\) is typically slower compared to other techniques, with the gradient-based maximization scheme \(\gradol\), the computational speed of \(\glbopt\) is of the same order as that of \(\dlcvx\) and \(\quito\) for \(N = 200\) grid points. At this discretization level, Table \ref{tab:solver_time_comparison_r3} shows a noticeable gap in the reported objective values. This discrepancy is attributable to the discretization error inherent in \(\dlcvx\) and \(\quito\), which enforces constraints only at the grid points.

Moreover, as the mesh is refined, the objective values obtained by \(\dlcvx\) and \(\quito\) approach those produced by \(\glbopt\). However, this improvement comes at the expense of increased computational effort due to the growth in problem dimension. In this refined regime, the computational time of \(\dlcvx\) and \(\quito\) becomes comparable to that observed for the simulated annealing variant of \(\glbopt\).
These observations indicate that, when comparable levels of accuracy are required, the proposed architecture remains computationally competitive with direct discretization-based optimization methods.